\newcommand{\R}{\mathbf{R}}
\newcommand{\pr}{\mathbf{P}}
\newcommand{\ex}{\mathbf{E}}
\newcommand{\N}{\mathbb{N}}
\newcommand{\la}{\lambda}
\theoremstyle{plain}
\newtheorem{theorem}{Theorem}
\newtheorem{corollary}{Corollary}
\newtheorem{proposition}{Proposition}
\theoremstyle{definition}
\newtheorem{remark}{Remark}
\theoremstyle{remark}
\newcommand{\formula}[2][nolabel]
{\ifthenelse{\equal{#1}{nolabel}}
 {\begin{align*} #2 \end{align*}}
 {\ifthenelse{\equal{#1}{}}
  {\begin{align} #2 \end{align}}
  {\begin{align} \label{#1} #2 \end{align}}
 }
}
\numberwithin{equation}{section}
\begin{document}

%
%                            ---------- o ----------
%

\title [Yamada-Watanabe  theorem and  applications]{Multidimensional Yamada-Watanabe theorem\\ and its applications to particle systems}
\thanks{The authors were supported by MNiSW grant N~N201~373136 and the l'Agence Nationale de la Recherche grant ANR-09-BLAN-0084-01}
\subjclass[2010]{{60J60, 60H15}}
\keywords{{stochastic differential equations, strong solutions, Yamada-Watanabe theorem, Wishart process}}
\author{Piotr Graczyk, Jacek Ma{\l}ecki}
\address{Piotr Graczyk \\ LAREMA \\ Universit\'e d'Angers \\ 2 Bd Lavoisier \\ 49045 Angers cedex 1, France}
\email{piotr.graczyk@univ-angers.fr}
\address{  Jacek Ma{\l}ecki,  \\ Institute of Mathematics and Computer Science \\ Wroc{\l}aw University of Technology \\ ul. Wybrze{\.z}e Wyspia{\'n}\-skiego 27 \\ 50-370 Wroc{\l}aw, Poland}
\email{jacek.malecki@pwr.wroc.pl }

\begin{abstract}
 A multidimensional    version of the Yamada-Watanabe theorem is proved. It implies
 a ``spectral'' matrix Yamada-Watanabe theorem.
It is also applied to particle systems of squared Bessel processes, corresponding 
  to matrix analogues of squared Bessel processes: Wishart and Jacobi matrix processes. The $\beta$-versions
  of these particle systems are also considered.
\end{abstract}

\maketitle
%
%                            ---------- o ----------
%
\section{Introduction}
\label{sec:intro}
In this article we  prove a multidimensional  analogue of the celebrated
Yamada-Watanabe theorem, ensuring the existence and {uniqueness} of strong solutions of one-dimensional stochastic differential equations (SDEs)
with  a  H\"older coefficient in the It\^o integral part.
It is proved in Section \ref{sec:yamada}, Theorem \ref{thm:betterYamada}.
 
 In Section \ref{sec:matrix}
 we derive a system of SDEs for the eigenvalues and the eigenvectors  
  for a solution of a matrix {SDE}
 of the form 
 \begin{equation*}
  dX_t=g(X_t) dB_t h(X_t) + h(X_t) dB_t^T g(X_t)  + b(X_t) dt
 \end{equation*}
 where $B_t$ is a Brownian matrix of dimension $p\times p$, the matrix stochastic process $X_t$
 takes values in the space of symmetric  $p\times p$ matrices and the functions $g,h,b:\R\rightarrow \R$
 act on the spectrum of   $X_t$. Under  some mild conditions on the functions  $g,h,b$ it is shown in
 Theorem \ref{prop:collision} that the eigenvalues never collide. The $\beta$-versions and complex versions of the 
 eigenvalue system are also considered for the collision time problem(Corollaries \ref{cor:beta},\ref{cor:complex}).
 
 If the functions $g,h,b$
 are such that $gh$ is 
 ${1/2}$-H\"older continuous, and   the
 symmetrized functions  $g^2 \otimes h^2$ and $b$ are
 Lipschitz continuous, then
 we establish in  Theorem \ref{th:yamadamatrix} the   existence and uniqueness of a strong solution 
 of the system of SDEs for the eigenvalues and the eigenvectors  of $X_t$. We call such a result
 ``a spectral matrix Yamada-Watanabe theorem''.

 Section \ref{sec:appli} contains interesting  applications.  We apply Theorems \ref{thm:yamada},  \ref{prop:collision} and  \ref{th:yamadamatrix}
 to

 (i) noncolliding particle systems of  squared Bessel processes which are intensely studied
in recent years in {statistical and mathematical physics} (Katori, Tanemura \cite{bib:katori,bib:katori2005,  bib:KatoriSugaku, bib:katori2011}).

(ii)  the systems of SDEs for the eigenvalues of Wishart
and Jacobi matrix processes, as well as of the $\beta$-Wishart and  $\beta$-Jacobi processes.
 We note   the importance of  the $\beta$-Wishart eigenvalues systems in statistical  physics: they are
 statistical mechanics models of ``$\log$-gases'', see the recent book of Forrester \cite{bib:forr}.
 
Surprisingly, the existence of strong solutions of SDEs for such H\"older-type  non-colliding particle systems  
was not established in general; only some   cases of (ii)  were treated by  Demni \cite{bib:demniCRAS,bib:demniARXIV,bib:demniJACOBI} and Lépingle \cite{bib:lep}.
In {Sections}  \ref{sec:systemsBessel} and  \ref{sec:systemsBeta} we prove the existence and uniqueness of a strong solution to  these systems of SDEs, for the whole range of the drift
parameter and $\beta\ge 1$.

The spectral matrix Yamada-Watanabe  theorem is   applied in Sections  \ref{sec:wishart}  and  \ref{sec:jacobi} to  matrix valued
 squared Bessel type processes:  Wishart and Jacobi matrix processes. We improve   the known results of Bru \cite{bib:b89, bib:bCRAS, bib:b91}, Mayerhofer et al. \cite{bib:mayer} and Doumerc \cite{bib:doumerc}, showing the existence and uniqueness of strong solutions of  the SDEs system for the eigenvalues and eigenvectors
of $X_t$,   for the whole range
of the drift parameter.

 In the Wishart case we contribute in this way to realization of a programme started
by Donati-Martin, Doumerc, Matsumoto and Yor \cite{bib:donatiyor}, claiming that Wishart processes have
  similar properties as classical $1$-dimensional  squared Bessel processes.

\section{A multidimensional Yamada-Watanabe theorem}
\label{sec:yamada}
Let us recall the classical Yamada-Watanabe theorem, see e.g.  \cite{bib:iw}, p.168 and \cite{bib:yw71}.
\begin{theorem}\label{thm:yamada}
Let $B(t)$ be a Brownian motion on $\R$. Consider the SDE
$$
dX(t)=\sigma(X(t))dB(t) + b(X(t))dt.
$$
If $|\sigma(x)-\sigma(y)|^2\le \rho(|x-y|)$  for a strictly increasing function $\rho$ on $\R^+$
with $\rho(0)=0$ and {$\int_{0^+}\rho^{-1}(x)dx=\infty$}, 
and $b$ is Lipschitz continuous, then the pathwise uniqueness of solutions
holds; consequently the equation has a unique strong solution.
\end{theorem}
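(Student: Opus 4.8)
The plan is to follow Yamada and Watanabe's original argument: first establish \emph{pathwise uniqueness} by an It\^o-formula estimate against a well-chosen family of smooth approximations of $|x|$, and then quote the general principle that pathwise uniqueness together with the existence of a weak solution yields a unique strong solution. For the first part, the key construction is the following. Since $\rho$ is strictly increasing with $\rho(0)=0$ and $\int_{0^+}\rho^{-1}(u)\,du=\infty$, one can choose a strictly decreasing sequence $1=a_0>a_1>a_2>\cdots$, $a_n\downarrow 0$, with $\int_{a_n}^{a_{n-1}}\rho^{-1}(u)\,du=n$ for every $n\ge 1$; then, for each $n$, a continuous function $\psi_n$ supported in $(a_n,a_{n-1})$ with $0\le\psi_n(u)\le \tfrac{2}{n}\rho^{-1}(u)$ and $\int\psi_n(u)\,du=1$ (possible because the integral of the upper bound over $(a_n,a_{n-1})$ equals $2$). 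Setting
\[
  \phi_n(x)=\int_0^{|x|}\!\!\int_0^{y}\psi_n(u)\,du\,dy ,
\]
one gets $\phi_n\in C^2(\R)$ even, $0\le\phi_n(x)\le|x|$ with $\phi_n(x)\to|x|$ as $n\to\infty$, $|\phi_n'|\le 1$, and $\phi_n''(x)=\psi_n(|x|)$, so that $\phi_n''(x)\,\rho(|x|)\le 2/n$ for all $x$.

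Next, let $X^1,X^2$ be two solutions on one probability space, driven by the same Brownian motion $B$ and with $X^1(0)=X^2(0)$ a.s.; put $\Delta=X^1-X^2$ and $\tau_N=\inf\{t:\,|X^1(t)|\vee|X^2(t)|\ge N\}$. Applying It\^o's formula to $\phi_n(\Delta(t\wedge\tau_N))$ and taking expectations, the stochastic-integral term drops out (it is a true martingale, since on $[0,\tau_N]$ its integrand is bounded, using $|\phi_n'|\le 1$ and the continuity of $\sigma$). The drift term is bounded by $|\phi_n'|\le 1$ and $|b(x)-b(y)|\le K|x-y|$, while the It\^o-correction term is controlled by $|\sigma(X^1)-\sigma(X^2)|^2\le\rho(|\Delta|)$ together with the last inequality of the previous paragraph, which yields
\[
  \ex\,\phi_n\bigl(\Delta(t\wedge\tau_N)\bigr)\ \le\ \frac{t}{n}\ +\ K\int_0^{t}\ex\,\bigl|\Delta(s\wedge\tau_N)\bigr|\,ds .
\]

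Letting $n\to\infty$ and using $\phi_n(\Delta)\to|\Delta|$ together with Fatou's lemma, one obtains $\ex|\Delta(t\wedge\tau_N)|\le K\int_0^t\ex|\Delta(s\wedge\tau_N)|\,ds$, so Gronwall's lemma forces $\ex|\Delta(t\wedge\tau_N)|=0$ for every $t$ and $N$; letting $N\to\infty$ shows $X^1\equiv X^2$ a.s., i.e.\ pathwise uniqueness holds. Combining this with the Yamada--Watanabe principle (pathwise uniqueness $+$ existence of a weak solution $\Rightarrow$ existence of a unique strong solution together with uniqueness in law) completes the proof.

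The delicate point is the construction in the first paragraph and, above all, the inequality $\phi_n''(x)\,\rho(|x|)\le 2/n$: the whole argument works because the divergence of $\int_{0^+}\rho^{-1}$ allows one to load the second derivatives of the $\phi_n$ so heavily that the It\^o-correction term --- which a priori is only of order $\rho(|\Delta|)\,\phi_n''(\Delta)$, rather than $|\Delta|$ --- becomes negligible as $n\to\infty$, leaving a purely Lipschitz, Gronwall-type estimate. The secondary technical care is the localization by $\tau_N$, needed both to make the stochastic integral a genuine martingale and to justify interchanging limit and expectation, since without a linear-growth assumption on $\sigma$ one has no a priori moment control on the solutions.
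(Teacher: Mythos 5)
Your proof is correct, but be aware that the paper itself does not prove Theorem \ref{thm:yamada}: it is recalled as the classical Yamada--Watanabe result with references to \cite{bib:iw} and \cite{bib:yw71}, and the machinery the paper actually develops is for its multidimensional variant, Theorem \ref{thm:betterYamada}. Your route is the original Yamada--Watanabe argument: the even $C^2$ functions $\phi_n$ with $|\phi_n'|\le 1$, $\phi_n\to|\cdot|$ and $\phi_n''(x)\rho(|x|)\le 2/n$, It\^o's formula applied to $\phi_n(X^1-X^2)$, the second-order term killed as $n\to\infty$, then Gronwall; the localization by $\tau_N$ correctly substitutes for any boundedness assumption, since on $[0,\tau_N]$ one has $|\sigma(X^1)-\sigma(X^2)|\le\sqrt{\rho(2N)}$, which is what makes the stopped stochastic integral a true martingale. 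The paper's proof of Theorem \ref{thm:betterYamada} (the closest in-paper analogue) takes a different route, following Le Gall \cite{bib:lg} as presented in \cite{bib:ry99}: the occupation-type estimate $\int_0^t \ind_{\{Y>\tilde Y\}}\,\rho(Y-\tilde Y)^{-1}\,d\langle Y-\tilde Y\rangle \le t$ together with Lemma 3.3 of \cite{bib:ry99}, p.~389, shows that the local time of $Y-\tilde Y$ at $0$ vanishes, and Tanaka's formula then represents $|Y-\tilde Y|$ as a martingale plus a Lipschitz drift term, after which Gronwall concludes. The two arguments are equivalent in substance --- Le Gall's lemma encapsulates exactly your $\phi_n$ construction --- but the local-time formulation is quotable as a black box and transfers cleanly to the coupled multidimensional system of Theorem \ref{thm:betterYamada}, where the drift $b_i(Y)$ mixes all coordinates, whereas your version is more elementary and self-contained. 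One small caveat, which you in fact flag yourself: the final clause (``unique strong solution'') requires a weak solution to feed into the Yamada--Watanabe principle; under the stated hypotheses $\sigma$ is continuous (the divergence of $\int_{0^+}\rho^{-1}$ forces $\rho(0^+)=0$), but some growth or boundedness hypothesis, as in the cited references or in the boundedness assumptions of Theorem \ref{thm:betterYamada}, is implicitly used for that existence step.
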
 
No multidimensional versions of the Yamada-Watanabe theorem seem to be known, even if their need is great (cf. \cite{bib:b91}, p. 738).  
 We propose a useful generalization, however we stress the fact that the H\"older continuous functions $\sigma_i$ appearing in the  following system of   SDEs are one-dimensional. The proof is based  on the  approach presented in Revuz,Yor \cite{bib:ry99}, in particular on the results of Le Gall \cite{bib:lg}.
 By $\|\cdot\|$ we mean the Euclidean norm $\|\cdot\|_2$ on $\R^d$.
\begin{theorem}
\label{thm:betterYamada}
  Let $p,q,r\in\N$ and the functions  $b_i:\R^{p}\to\R$, $i=1,\ldots,p$ and $c_k,d_j:\R^{p+r}\to \R$, $k=p+1,\ldots,p+q$, $j=p+1,\ldots,p+r$, be bounded real-valued and continuous,   satisfying the following Lipschitz conditions
  \formula{
     |b_i(y_1)-b_i(y_2)|&\leq A ||y_1-y_2||\/,\quad i=1,\ldots,p\/,\\
     |c_{k}(y_1,z_1)-c_k(y_2,z_2)|&\leq A||(y_1,z_1)-(y_2,z_2)|| \/,\quad k=p+1,\ldots,p+q\/,\\
     |d_j(y_1,z_1)-d_j(y_2,z_2)|&\leq A ||(y_1,z_1)-(y_2,z_2)|| \/,\quad j=p+1,\ldots,p+r\/,
  }
  for every $y_1,y_2\in\R^p$ and $z_1,z_2\in\R^r$. Moreover, let $\sigma_i:\R\to \R$, $i=1,\ldots,p$, be a set of bounded Borel functions such that
  \formula{
    |\sigma_i(x)-\sigma_i(y)|^2\leq \rho_i(|x-y|)\/,\quad x,y\in\R\/,
  }
  where $\rho_i:(0,\infty)\to (0,\infty)$ are Borel functions such that $\int_{0^+}\rho_i^{-1}(x)dx=\infty$.
 Then the pathwise uniqueness holds for the following system of stochastic differential equations
  \formula[eq:SDE:lemma]{
  dY_i &= \sigma_i({Y_i})dB_i+b_i(Y)dt\/,\quad i=1,\ldots,p\/,\\
  dZ_j &= \sum_{k=p+1}^{p+q}c_k(Y,Z)dB_k+d_j(Y,Z)dt\/,\quad j=p+1,\ldots,p+r\/,
  }
  where $B_1,\ldots,B_{p+q}$ are independent Brownian motions.
\end{theorem}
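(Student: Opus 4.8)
The plan is to reduce pathwise uniqueness for the whole system to two ingredients applied in sequence: first the one-dimensional Yamada–Watanabe estimate for the $Y_i$-coordinates, exploiting the crucial fact that $\sigma_i$ depends only on $Y_i$, so each $Y_i$ equation decouples (as far as the diffusion coefficient is concerned) from the rest of the system; second, a Gronwall argument for the $Z_j$-coordinates once the $Y$-coordinates are known to agree. Suppose $(Y,Z)$ and $(Y',Z')$ are two solutions on the same probability space with the same Brownian motions $B_1,\dots,B_{p+q}$, starting from the same initial condition. I would treat the $p$ equations for $Y_i$ one at a time: for fixed $i$, the process $Y_i - Y_i'$ satisfies $d(Y_i-Y_i') = (\sigma_i(Y_i)-\sigma_i(Y_i'))\,dB_i + (b_i(Y)-b_i(Y'))\,dt$, and I apply the Le Gall / Revuz–Yor approach (the approximation of $|x|$ by smooth functions $\phi_n$ built from $\rho_i$, as in \cite{bib:lg}, \cite{bib:ry99}) to control the local time of $Y_i - Y_i'$ at zero and obtain $\ex|Y_i(t)-Y_i'(t)| \le A\int_0^t \ex\|Y(s)-Y'(s)\|\,ds$. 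Summing over $i$ and using $\|Y-Y'\| \le \sum_i |Y_i - Y_i'|$ gives a closed Gronwall inequality for $\ex\|Y(s)-Y'(s)\|$, whence $Y \equiv Y'$ up to indistinguishability.

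Once $Y=Y'$, the $Z_j$ equations become genuinely Lipschitz: $d(Z_j - Z_j') = \sum_{k} (c_k(Y,Z)-c_k(Y,Z'))\,dB_k + (d_j(Y,Z)-d_j(Y,Z'))\,dt$, and here I can use the standard argument — apply It\^o's formula to $\|Z(t)-Z'(t)\|^2$, take expectations (the stochastic integral is a true martingale since the $c_k$ are bounded), and use the Lipschitz bounds on $c_k$ and $d_j$ together with $Y=Y'$ to get $\ex\|Z(t)-Z'(t)\|^2 \le C\int_0^t \ex\|Z(s)-Z'(s)\|^2\,ds$; Gronwall then forces $Z\equiv Z'$. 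The boundedness hypotheses on $b_i, c_k, d_j, \sigma_i$ are used to justify that all the relevant stochastic integrals are martingales and that the expectations appearing in the Gronwall inequalities are finite; a localization by stopping times $T_N = \inf\{t: \|Y(t)\|+\|Y'(t)\|+\|Z(t)\|+\|Z'(t)\| > N\}$ can be inserted if one prefers not to rely on boundedness for integrability, letting $N\to\infty$ at the end.

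Finally, pathwise uniqueness together with the existence of a weak solution yields the existence of a unique strong solution by the abstract Yamada–Watanabe principle (in its multidimensional form, which is standard and requires no new idea); weak existence here follows from the continuity and boundedness of the coefficients via a standard tightness/martingale-problem argument. The main obstacle is the first step: making the Le Gall-type local-time estimate work for the $Y_i$ equation when the drift $b_i$ depends on \emph{all} of $Y$ rather than on $Y_i$ alone. This is precisely why the structural assumption that each $\sigma_i$ is a function of the single variable $Y_i$ is essential — it keeps the martingale part of $Y_i - Y_i'$ one-dimensional with a Hölder modulus, so the classical $\phi_n$-approximation applies verbatim, while the $Y$-dependence of the drift is harmless because it only contributes a Lipschitz term that is absorbed into the Gronwall inequality after summing over $i$. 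I would take care to state the local-time argument cleanly (the occupation-times formula shows the local time of $Y_i-Y_i'$ at $0$ vanishes, so $|Y_i-Y_i'|$ is itself a semimartingale with no singular part), since that is the one place where the hypothesis $\int_{0^+}\rho_i^{-1}=\infty$ is consumed.
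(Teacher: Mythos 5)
Your proposal follows essentially the same route as the paper's proof: the local-time/Tanaka argument in the style of Le Gall and Revuz--Yor (showing the local time of $Y_i-\tilde Y_i$ at $0$ vanishes, exploiting that $\sigma_i$ depends only on $Y_i$) to get the $L^1$ Gronwall inequality for the $Y$-coordinates, followed by the standard Lipschitz $L^2$ Gronwall estimate for the $Z$-coordinates once $Y=\tilde Y$. The only cosmetic differences (It\^o's formula applied to $\|Z-\tilde Z\|^2$ versus the Schwarz-inequality estimate on the integral representation, and the optional localization remark) do not change the argument, so your proof is correct and matches the paper's.
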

% \pg{  ({\bf REMARK: Signification of $p,q,r$}:  $p$=number of entries of $Y$  (later: of eigenvalues diagonal matrix $\Lambda$; $p$=number of eigenvalues)\\
%$r$=  number of entries of $Z$, (later: of eigenvector matrix $H$; $r=p^2$)\\
%$q$=number of Brownian Motions in SDE's for $dZ$ (later: number of independent BM's $\beta_{kj}$ in SDE's for $dH$; $q= p(p-1/2)$ .) 
%}

\begin{proof}
Let $(Y,Z)$ and $(\tilde{Y},\tilde{Z})$ be two solutions with respect to the same Brownian motion $B=(B_i)_{i\leq p+q}$ such that $Y(0)=\tilde{Y}(0)$ and $Z(0)=\tilde{Z}(0)$ a.s. For every $i=1,\ldots,p$ we have
\formula[eq:SDE:01]{
  Y_i(t)-\tilde{Y}_i(t) &= \int_0^t (\sigma_i(Y_i(s))-\sigma_i(\tilde{Y}_i(s)))dB_i(s) +\int_0^t (b_i(Y(s))-b_i(\tilde{Y}(s)))ds\/.
}
Then we get
\formula{
  \int_0^t \frac{\textbf{1}_{\{Y_i(s)>\tilde{Y}_i(s)\}}}{\rho_i(Y_i(s)-\tilde{Y}_i(s))}d\left<Y_i-\tilde{Y}_i,Y_i-\tilde{Y}_i\right> 
  =\int_0^t \frac{(\sigma_i(Y_i(s))-\sigma_i(\tilde{Y}_i(s)))^2}{\rho_i(Y_i(s)-\tilde{Y}_i(s))}\textbf{1}_{\{Y_i(s)>\tilde{Y}_i(s)\}}ds\leq t\/.
}

%\pg{ REMARK. Yes, the local time of 0-process at 0 is equal to 0:
%we take the limit of occupation times, where the integral is with respect to the
%quadratic variation of 0, so 0. This is the local time in the semi-martingale sense;
%in the local time for BM the integral is w.r. to the Lebesgue measure $dt$ }

Thus, applying Lemma 3.3 from \cite{bib:ry99}  p. 389 , we get that the local time of $Y_i-\tilde{Y}_i$ at $0$ vanishes identically. Consequently, by Tanaka's formula we get
\formula{
  |Y_i(t)-\tilde{Y}_i(t)| &= \int_0^t \textrm{sgn}(Y_i(s)-\tilde{Y}_i(s))d(Y_i(s)-\tilde{Y}_i(s))+L_t^0(Y_i-\tilde{Y}_i)\\
  &= \int_0^t \textrm{sgn}(Y_i(s)-\tilde{Y}_i(s))d(Y_i(s)-\tilde{Y}_i(s))\/.
}
Since $\sigma_i$ is bounded, using (\ref{eq:SDE:01}), we state that 
\formula{
 |Y_i(t)-\tilde{Y}_i(t)| - \int_0^t \textrm{sgn}(Y_i(s)-\tilde{Y}_i(s))(b_i(Y(s))-b_i(\tilde{Y}(s)))ds
}
is a martingale vanishing at $0$. This together with the Lipschitz conditions satisfied by $b_i$ give
\formula{
   \ex|Y_i(t)-\tilde{Y}_i(t)|\leq A\int_0^t \ex||Y(s)-\tilde{Y}(s)||ds\/.
}
Summing up the above-given inequalities we arrive at
\formula{
   \ex||Y(t)-\tilde{Y}(t)||\leq C\int_0^t \ex||Y(s)-\tilde{Y}(s)||ds
}
and Gronwall's lemma shows that $Y(t)=\tilde{Y}(t)$ for every $t>0$ a.s.

  Using in a  standard way the properties of the It\^o integral and the Schwarz inequality,
  similarly as in \cite{bib:iw}, p. 165,     we get that for every $t\in [0,T]$
\formula{
  \ex|Z_j(t)-\tilde{Z}_j(t)|^2 &\leq C\sum_{k=p+1}^{p+q}\ex(\int_0^t (c_k(Y(s),Z(s))-c_k(\tilde{Y}(s),\tilde{Z}(s))dB_k(s))^2\\
  &  +C\ex(\int_0^t (d_j(Y(s),Z(s))-d_j(\tilde{Y}(s),\tilde{Z}(s))ds)^2\\
  &\leq C\sum_{k=p+1}^{p+q}\ex\int_0^t (c_k(Y(s),Z(s))-c_k(\tilde{Y}(s),\tilde{Z}(s)))^2ds\\
  & +CT\ex\int_0^t (d_j(Y(s),Z(s))-d_j(\tilde{Y}(s),\tilde{Z}(s)))^2ds\\
  &\leq CA^2(q+T) \ex\int_0^t(||Y(s)-\tilde{Y}(s)||^2+||Z(s)-\tilde{Z}(s)||^2)ds
}
Thus, using the  previously proved fact that $Y=\tilde{Y}$ {a.s.} we get that
\formula{
  \ex||Z(t)-\tilde{Z}(t)||^2\leq CA^2(q+T)r\int_0^t \ex||Z(s)-\tilde{Z}(s)||^2ds\/.
}
 One more application of the Gronwall's lemma ends the proof.
\end{proof}

%\begin{remark}
% A typical example of functions $\sigma_i$ to which Theorem \ref{thm:betterYamada} applies is
% $\sigma_i(x)=\sqrt{x}$, with $\rho(x)=x$. m{Tu jest problem, bo w zalozeniach jest ograniczonosc $\sigma$}
%\end{remark}

%%%%%%%%%%%%%%%%%%%%%%%%%%%%%%%%%%%%%%%%%%%%%%%%%%%%%%%%%%%%%%%%%%%%%%%%%%%%%%%%%%%%%%%%%%%%%%%%%%%%%%%%%%%%%%%%%
%%%%%%%%%%%%%%%%%%%%%%%%%%%%%%%%%%%%%%%%%%%%%%%%%%%%%%%%%%%%%%%%%%%%%%%%%%%%%%%%%%%%%%%%%%%%%%%%%%%%%%%%%%%%%%%%%%
\section{Eigenvalues and eigenvectors of matrix stochastic processes}\label{sec:matrix}

\subsection{Real case}
Consider the space $\mathcal{S}_p$ of symmetric $p\times p$ real matrices. Recall that
if $g:\R\rightarrow \R$ and  $X\in\mathcal{S}_p$ then   $g(X)=Hg(\Lambda) H^T$,
where $X=H\Lambda H^T$
is a diagonalization of $X$, with $H$ an  orthonormal matrix  and $\Lambda$ a diagonal one. 
 Denote by  $B_t$  a Brownian $p\times p$ matrix.
 Let  $X_t$ be a stochastic process with values in  $\mathcal{S}_p$
 satisfying the matrix SDE
  \begin{equation}\label{eq:general}
  dX_t=g(X_t) dB_t h(X_t) + h(X_t) dB_t^T g(X_t)  + b(X_t) dt
 \end{equation}
 where $g,h,b:\R\rightarrow \R$, and  $X_0\in  \tilde {\mathcal S}_p$, the set of symmetric
 matrices with $p$ different eigenvalues.
 
 Let $\Lambda_t=diag(\lambda_i(t))$ be the diagonal matrix of eigenvalues of $X_t$
 ordered  increasingly: $\lambda_1(t)\le  \lambda_2(t) \le  \ldots  \le \lambda_p(t)$
 and $H_t$ an orthonormal  matrix of eigenvectors of $X_t$. Matrices
 $\Lambda$ and $H$   may be chosen(\cite{bib:norris}) as smooth functions of $X$ until the first
 collision time  
 $$\tau=\inf\{t:\ \lambda_i(t)=\lambda_j(t) {\rm\ for \ some\ }i\not=j\}.$$

We want to consider the SDEs
 satisfied by the processes of eigenvalues and eigenvectors of $X_t$.
In the sequel   we use the notation $dY d Z=d\langle Y,Z\rangle$ for the quadratic variation process.
Note that if $Y,Z$ are matrix valued processes, then $dYdZ$ is a matrix process (see e.g. \cite{PGLV}).
\begin{theorem}\label{th:eigen}
Suppose that an  $\mathcal{S}_p$-valued stochastic process $X_t$
satisfies the following matrix  stochastic differential equation
 \begin{equation*} 
  dX_t=g(X_t) dB_t h(X_t) + h(X_t) dB_t^T g(X_t)  + b(X_t) dt
 \end{equation*}
 where $g,h,b:\R\rightarrow \R$, and  $X_0\in  \tilde {\mathcal S}_p$.

Let $G(x,y)=g^2(x)h^2(y)+g^2(y)h^2(x)$. Then, for $t<\tau$ the eigenvalues process  $\Lambda_t$  and the eigenvectors process $H_t$
verify the following  stochastic differential equations

\begin{equation}\label{eq:dlambda}
 d\la_i=2g(\la_i)h(\la_i)d\nu_i +\left(b(\lambda_i)+\sum_{k\not=i} \frac{G(\lambda_i,\lambda_k)
}
 {\lambda_i-\lambda_k}\right) dt 
 \end{equation}

\begin{equation}\label{eq:dh}
dh_{ij}=\sum_{k\not=j}h_{ik} \frac{\sqrt{G(\la_j,\la_k)}}{\la_j-\la_k}d\beta_{kj}
 - \frac12 h_{ij} \sum_{k\not=j} \frac{G(\la_j,\la_k)}{(\la_k-\la_j)^2}dt
\end{equation}
where $(\nu_i)_i$ and $(\beta_{kj})_{k<j}$ are independent Brownian motions and $\beta_{jk}=\beta_{kj}$. 
\end{theorem}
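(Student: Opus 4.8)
The plan is to compute $d\Lambda_t$ and $dH_t$ by differentiating the relation $X_t = H_t \Lambda_t H_t^T$ using It\^o's formula, exactly as in the classical derivation of the Dyson-type equations for Brownian matrices (Bru, Norris et al.), but keeping track of the functions $g,h,b$ acting on the spectrum. First I would fix $t<\tau$, so that by the cited result of Norris the maps $X\mapsto\Lambda$, $X\mapsto H$ are smooth, and set $N_t := H_t^T\, dX_t\, H_t$, the matrix process $dX_t$ expressed in the moving eigenbasis. Since $g(X_t)=H_t g(\Lambda_t) H_t^T$ and likewise for $h$, and since $B_t$ is a $p\times p$ Brownian matrix, $dN_t$ is again (the differential of) a matrix whose martingale part is built from $g(\lambda_i),h(\lambda_i)$ and a Brownian matrix $d\tilde B_t := H_t^T\, dB_t\, H_t$. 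Because the entries of $B_t$ are independent standard Brownian motions and $H_t$ is orthogonal and adapted, $\tilde B_t$ is again a Brownian $p\times p$ matrix; this is the standard orthogonal-invariance argument and I would state it as such. The diagonal entries $dN_{ii}$ have martingale part $2g(\lambda_i)h(\lambda_i)\,d\tilde B_{ii}$, hence define $\nu_i$ by $d\nu_i := d\tilde B_{ii}$ (up to the obvious normalization), and the off-diagonal entries $dN_{ij}$, $i\ne j$, have quadratic variation $G(\lambda_i,\lambda_j)\,dt$, which is why $\sqrt{G(\lambda_j,\lambda_k)}$ appears in \eqref{eq:dh}; the $\beta_{kj}$ are the normalized off-diagonal Brownian motions, symmetric in their indices, and independence of $(\nu_i)$ and $(\beta_{kj})_{k<j}$ follows from independence of the entries of a Brownian matrix together with the symmetry of $\tilde B$.

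Next I would apply It\^o's formula to $\Lambda_t = H_t^T X_t H_t$, equivalently expand $dX_t = d(H_t\Lambda_t H_t^T)$:
\formula{
 dX_t = dH_t\,\Lambda_t\,H_t^T + H_t\,d\Lambda_t\,H_t^T + H_t\,\Lambda_t\,dH_t^T
 + dH_t\,d\Lambda_t\,H_t^T + H_t\,d\Lambda_t\,dH_t^T + dH_t\,\Lambda_t\,dH_t^T\/.
}
Conjugating by $H_t^T$ on the left and $H_t$ on the right, and writing $d\Pi_t := H_t^T dH_t$ (an antisymmetric matrix-valued differential, since $H_t^T H_t = I$ forces $d\Pi_t + d\Pi_t^T + d\Pi_t\,d\Pi_t^T = 0$), one obtains an identity of the schematic form $N_t = d\Pi_t\,\Lambda + d\Lambda + \Lambda\,d\Pi_t^T + (\text{second-order terms})$. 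Reading off the diagonal gives $d\lambda_i$: the finite-variation diagonal part of the second-order correction produces precisely the term $\sum_{k\ne i} G(\lambda_i,\lambda_k)/(\lambda_i-\lambda_k)$, because $d\Pi_{ik}$ is forced by the off-diagonal equation to equal $dN_{ik}/(\lambda_i-\lambda_k)$ plus drift, so $\sum_k d\Pi_{ik}\,d\Pi_{ki}$ contributes $\sum_{k\ne i} \frac{1}{(\lambda_i-\lambda_k)^2}\cdot(\text{$d\langle N_{ik},N_{ki}\rangle$})$, and $d\langle N_{ik}, N_{ki}\rangle = -G(\lambda_i,\lambda_k)\,dt$ up to sign bookkeeping — this is where $G(x,y)=g^2(x)h^2(y)+g^2(y)h^2(x)$ enters. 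Reading off the off-diagonal entries $i\ne j$ gives $d\Pi_{ij} = dN_{ij}/(\lambda_i - \lambda_j)$ for the martingale part; then $dh_{ij} = (H_t\,d\Pi_t)_{ij} = \sum_k h_{ik}\,d\Pi_{kj}$, and substituting the expression for $d\Pi_{kj}$ together with its drift — obtained from the antisymmetry relation $d\Pi_{kj}+d\Pi_{jk}+\sum_\ell d\Pi_{k\ell}d\Pi_{\ell j}=0$ on the diagonal $k=j$, which yields $d\Pi_{jj}^{\,\text{drift}} = -\tfrac12\sum_{k\ne j}\frac{G(\lambda_j,\lambda_k)}{(\lambda_k-\lambda_j)^2}dt$ — gives \eqref{eq:dh}.

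The main obstacle is the careful second-order It\^o bookkeeping: organizing the cross-variation terms $dH_t\,dH_t^T$, $dH_t\,d\Lambda_t$, $d\Lambda_t\,dH_t^T$ and showing that, once expressed through $d\Pi_t$ and $N_t$, they collapse to exactly the stated drifts — in particular getting the signs and the factor $\tfrac12$ right in \eqref{eq:dh}, and verifying that the $b$-term passes through unchanged since $b(X_t)=H_t b(\Lambda_t) H_t^T$ contributes $b(\lambda_i)\,dt$ on the diagonal and nothing to the martingale part. A secondary technical point is justifying that $\tilde B_t = H_t^T dB_t H_t$ is genuinely a Brownian matrix with independent entries up to the symmetrization, and that the resulting $(\nu_i)$ and $(\beta_{kj})_{k<j}$ are jointly independent Brownian motions; this follows from L\'evy's characterization applied to $\tilde B$ together with the independence and variance structure of the entries of $B_t$. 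Everything is carried out on the stochastic interval $[0,\tau)$, where all denominators $\lambda_i-\lambda_k$ are nonzero and the coefficients are locally bounded, so the SDEs \eqref{eq:dlambda}--\eqref{eq:dh} are well posed there.
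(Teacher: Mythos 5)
Your plan is correct and follows essentially the same route as the paper: differentiate the diagonalization $X=H\Lambda H^T$, introduce the stochastic logarithm $H^T dH$, use the off-diagonal identity to express it as $dN_{ij}/(\lambda_j-\lambda_i)$, compute the brackets of $N$ from the matrix SDE, read off the diagonal and the second-order corrections for the drifts, and identify $\nu_i$, $\beta_{kj}$ via their (co)variations and L\'evy's characterization; the paper merely organizes the second-order bookkeeping through Stratonovich calculus instead of raw It\^o, and works directly with $dN_{ij}\,dN_{km}$ rather than the rotated Brownian matrix $H^T dB\, H$. Two minor slips to fix in the write-up: since $N$ is symmetric one has $d\langle N_{ik},N_{ki}\rangle=+G(\lambda_i,\lambda_k)\,dt$ (the minus sign in the eigenvalue drift enters through $d\Pi_{ik}\,d\Pi_{ki}=-(d\Pi_{ik})^2$ by antisymmetry), and it is $N$, not $\tilde B$, that is symmetric, so the joint independence of the $\nu_i$ and $\beta_{kj}$ should be argued from vanishing cross-brackets rather than from ``symmetry of $\tilde B$''.
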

\begin{proof}
The proof generalizes ideas of Bru \cite{bib:b89} in the case of Wishart processes. See also \cite{bib:katori2005}
for the SDEs for the eigenvalue processes of $X_t$.
Following \cite{bib:doumerc} in the case of matrix  Jacobi processes,  it is handy  to use the Stratonovich differential notation
$ X\circ dY= XdY +\frac12 dXdY$. We then write the It\^o product formula 
\formula{
d(XY)= dX\circ Y + X\circ dY.
}
We also have $dX\circ(YZ)=(dX\circ Y)\circ Z$ and $( X\circ dY)^T=dY^T \circ X^T$.\\
Define $A$,  a stochastic logarithm of $H${,} by
\formula{
dA=H^{-1}\circ dH= H^T\circ dH.
}
Observe that by It\^o formula applied to $H^TH=I$, the matrix $A$ is  skew-symmetric.
By It\^o formula applied to $\Lambda =H^T XH$, setting $dN=H^T\circ dX\circ H$, {we get}
$$
d\Lambda= dN +  \Lambda\circ dA - dA\circ \Lambda.
$$
The process   $\Lambda\circ dA - dA\circ \Lambda$  is zero on the diagonal.
Consequently $d\lambda_i=dN_{ii}$ and $0=dN_{ij}+(\lambda_i-\lambda_j)\circ dA_{ij}$, when $i\not= j$.
Thus
\begin{equation}\label{eq:daij}
 dA_{ij}=\frac1{\lambda_j-\lambda_i}\circ dN_{ij}, \ \ \ i\not=j.
\end{equation}
For further computations we need the quadratic variation
 $dX_{ij}dX_{km} $
which is easily computed from (\ref{eq:general}).  
\begin{equation*}
d X_{ ij} d X_{km}=\left( g^2(X)_{ik }h^2(X)_{jm } + g^2(X)_{im }h^2(X)_{jk } + g^2(X)_{jk }h^2(X)_{im } +   g^2(X)_{jm }h^2(X)_{ik }\right)dt.
\end{equation*}
The martingale  part of $dN$ equals the martingale part of $H^T dX\, H$ and by the last formula
\begin{equation}\label{eq:dndn}
dN_{ij}dN_{km}=\left( g^2(\Lambda)_{ik} h^2(\Lambda)_{jm} +
g^2(\Lambda)_{im } h^2(\Lambda)_{jk } +
g^2(\Lambda)_{jk } h^2(\Lambda)_{im }
+ g^2(\Lambda)_{jm } h^2(\Lambda)_{ik }\right)dt.
\end{equation}

From (\ref{eq:dndn}) it follows that 
\begin{equation}\label{eq:dniidnjj}
dN_{ii}dN_{jj}=4\delta_{ij}g^2(\la_i)h^2(\lambda_i)dt.
\end{equation}
Now  we compute the finite variation part  $dF$ of $dN$ 
 \begin{eqnarray*}
  dF&=&H^T b(X) Hdt + \frac12(dH^TdX\, H + H^TdXdH)\\
&=& b(\Lambda)dt + \frac12 \left( (dH^T\,H)(H^TdX\, H)+(H^TdX\, H) (H^TdH)\right)\\
&=& b(\Lambda)dt + \frac12(dNdA+(dNdA)^T).  
 \end{eqnarray*}
 
 Using (\ref{eq:daij}) and (\ref{eq:dndn}) we find, writing $G(x,y)=g^2(x)h^2(y)+g^2(y)h^2(x)$,
\formula{
 (dNdA)_{ij}=\sum_{k\not=j} dN_{ik}dA_{kj}= \delta_{ij} \sum_{k\not=i} \frac{G(\lambda_i,\lambda_k)
}{\lambda_i-\lambda_k} dt. 
 }
 It follows that the matrix $dNdA$ is diagonal, so also $dF$ is diagonal,
 \formula{
 dF_{ii}=b(\lambda_i)dt+\sum_{k\not=i} \frac{G(\lambda_i,\lambda_k)}{\lambda_i-\lambda_k} dt.
 }
 Finally, using (\ref{eq:dniidnjj}) and the last formula, there exist independent Brownian motions
 $\nu_i$, $i=1,\ldots,m$, such that  (\ref{eq:dlambda}) holds.
 
 In order to find SDEs for $H_t$, we deduce from the definition of $dA$
 that
  \formula{
  dH=H\circ dA=HdA +\frac12 dHdA=HdA+\frac12H dAdA{.}
}
By (\ref{eq:dndn}) we find $dN_{ij}dN_{ij}=g^2(\la_i)h^2(\la_j)+g^2(\la_j)h^2(\la_i)$
and $dN_{ij}dN_{km}=0$ when the ordered pairs $i<j$ and $k<m$ are different. We infer
 from (\ref{eq:daij}) that  
\begin{equation}\label{eq:daijexplicit}
dA_{ij}=\frac{\sqrt{G(\la_i,\la_j)}}{\la_j-\la_i}d\beta_{ij},
\end{equation}
where the Brownian motions $(\beta_{ij})_{i<j}$ are independent and {$\beta_{ji}=\beta_{ij}$}. Moreover, when $k<m$,
we have $d\lambda_i dA_{km}=dN_{ii}dN_{km}{/(\la_m-\la_k)}=0$ by (\ref{eq:dndn}),
so the Brownian motions $(\beta_{ij})_{i<j}$ and $(\nu_i)_i$ are independent.
From (\ref{eq:daijexplicit}) we deduce that the matrix $dAdA$ is diagonal and
\formula{
(dAdA)_{ii}=-\sum_{k\not=i}dA_{ik}dA_{ik}=-\sum_{k\not=i} \frac {G(\la_i,\la_k)}
{(\la_k-\la_i)^2}dt. 
}
Now we can compute $dH=HdA+\frac12H dAdA$ and prove (\ref{eq:dh}).
\end{proof}

\subsection{Complex case}
 In  this subsection we study the eigenvalues  process for a process  $X_t$
 with values in the space $\mathcal{H}_p$ of Hermitian $p\times p$ matrices.
 
\begin{theorem}\label{th:eigenC}
Let $W_t$ be a complex $p \times p$ Brownian matrix (i.e. $W_t=B^1_t+iB^2_t$ where $B^1_t$
and $B^2_t$ are two independent real Brownian $p\times p$ matrices).

Suppose that an  $\mathcal{H}_p$-valued stochastic process $X_t$
satisfies the following matrix  stochastic differential equation
 \begin{equation}\label{eq:generalC}
  dX_t=g(X_t) dW_t h(X_t) + h(X_t) dW_t^* g(X_t)  + b(X_t) dt {,}
 \end{equation}
 where $g,h,b:\R\rightarrow \R$, and  $X_0\in  \tilde {\mathcal H}_p$.

Let $G(x,y)=g^2(x)h^2(y)+g^2(y)h^2(x)$. Then, for $t<\tau$ the eigenvalues process  $\Lambda_t$ 
verifies the following system of  stochastic differential equations

\begin{equation}\label{eq:dlambdaC}
 d\la_i=2g(\la_i)h(\la_i)d\nu_i +\left(b(\lambda_i)+2\sum_{k\not=i} \frac{G(\lambda_i,\lambda_k)
}
 {\lambda_i-\lambda_k}\right) dt {,}
 \end{equation}
where $(\nu_i)_i$ are independent Brownian motions. 
\end{theorem}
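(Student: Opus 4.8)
The plan is to run the argument of Theorem~\ref{th:eigen} verbatim in the Hermitian setting, replacing transposes by adjoints and the real Brownian matrix by the complex one $W$, and to keep careful track of the two places where the It\^o covariation structure of $W$ differs from that of a real Brownian matrix. For $t<\tau$ write $X_t=H_t\Lambda_t H_t^*$ with $H_t$ unitary and $\Lambda_t=\mathrm{diag}(\la_i(t))$, both smooth in $t$ by \cite{bib:norris}. Using the Stratonovich product rule $d(XY)=dX\circ Y+X\circ dY$ and the adjoint rule $(X\circ dY)^*=dY^*\circ X^*$, put $dA=H^*\circ dH$; It\^o's formula applied to $H^*H=I$ gives $dA^*+dA=0$, so $A$ is skew-Hermitian. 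Setting $dN=H^*\circ dX\circ H$ and applying It\^o's formula to $\Lambda=H^*XH$ gives $d\Lambda=dN+\Lambda\circ dA-dA\circ\Lambda$; the commutator has $(i,j)$-entry $(\la_i-\la_j)\circ dA_{ij}$, hence vanishes on the diagonal, so $d\la_i=dN_{ii}$ and $dA_{ij}=(\la_j-\la_i)^{-1}\circ dN_{ij}$ for $i\neq j$, exactly as in (\ref{eq:daij}).

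The one genuinely new computation is the covariation of the entries of $X$. Since $W=B^1+iB^2$ with $B^1,B^2$ independent real Brownian matrices, one has $dW_{ab}\,dW_{cd}=0$ while $dW_{ab}\,\overline{dW_{cd}}=2\delta_{ac}\delta_{bd}\,dt$. Expanding the martingale part $M=g(X)\,dW\,h(X)+h(X)\,dW^*\,g(X)$, the pairings of type $dW\,dW$ and $\overline{dW}\,\overline{dW}$ --- which in the real computation produced the terms $g^2(X)_{ik}h^2(X)_{jm}$ and $g^2(X)_{jm}h^2(X)_{ik}$ --- now vanish, whereas the surviving cross pairings of type $dW\,\overline{dW}$ each acquire a factor $2$, giving
\formula{dX_{ij}\,dX_{km}=2\bigl(g^2(X)_{im}h^2(X)_{kj}+g^2(X)_{kj}h^2(X)_{im}\bigr)\,dt.}
Since $H$ is unitary and adapted, $H^*WH$ has the same covariation structure, so the same identity holds for $dN_{ij}\,dN_{km}$ with $\Lambda$ in place of $X$; in particular $dN_{ik}\,dN_{ki}=2G(\la_i,\la_k)\,dt$ for $i\neq k$ and $dN_{ii}\,dN_{jj}=4\delta_{ij}g^2(\la_i)h^2(\la_i)\,dt$. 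By L\'evy's characterization the martingale part of $d\la_i=dN_{ii}$ can be written $2g(\la_i)h(\la_i)\,d\nu_i$ with $(\nu_i)_i$ independent Brownian motions, introducing an auxiliary Brownian motion where $g(\la_i)h(\la_i)$ vanishes exactly as in the proof of Theorem~\ref{th:eigen}.

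It remains to read off the drift from the Stratonovich-to-It\^o correction, $dF=b(\Lambda)\,dt+\tfrac12\bigl(dH^*\,dX\,H+H^*\,dX\,dH\bigr)$, which as in the real case (using $H^*H=I$, $N^*=N$ and the fact that the martingale parts of $H^*dH$ and $dH^*\,H$ are $dA$ and $dA^*$) equals $b(\Lambda)\,dt+\tfrac12\bigl(dN\,dA+(dN\,dA)^*\bigr)$. Since $(dN\,dA)_{ii}=\sum_{k\neq i}(\la_i-\la_k)^{-1}dN_{ik}\,dN_{ki}=\sum_{k\neq i}\frac{2G(\la_i,\la_k)}{\la_i-\la_k}\,dt$ is real, one obtains
\formula{dF_{ii}=\Bigl(b(\la_i)+2\sum_{k\neq i}\frac{G(\la_i,\la_k)}{\la_i-\la_k}\Bigr)\,dt,}
and combining with the martingale part yields (\ref{eq:dlambdaC}). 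The principal obstacle is not conceptual but one of bookkeeping: the entries of $g(X)$ and $h(X)$ are themselves complex when $X$ is merely Hermitian, so conjugations may not be moved through them, and one must be scrupulous about exactly which of the four pairings of the real computation survive and with which constant --- it is precisely here that the coefficient $2$ multiplying the interaction sum in (\ref{eq:dlambdaC}), absent from the real-case formula (\ref{eq:dlambda}), arises.
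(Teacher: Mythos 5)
Your overall route is exactly the paper's: rerun the real-case proof with adjoints in place of transposes, recompute the entrywise covariation of $dX$ from $dW\,dW=0$ and $dW\,d\overline{W}=2\,dt$, and read the drift off the Stratonovich correction $\tfrac12\bigl(dN\,dA+(dN\,dA)^*\bigr)$. Your covariation formula (with the factor $2$) and its consequences $dN_{ik}\,dN_{ki}=2G(\la_i,\la_k)\,dt$ and $dN_{ii}\,dN_{jj}=4\delta_{ij}g^2(\la_i)h^2(\la_i)\,dt$ are correct, and the passage to independent Brownian motions $\nu_i$ is as in the real case.

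There is, however, one genuine gap, and it is precisely the second of the two places where the complex case differs from the real one (you announce care about the pairings, but miss this point). For unitary $H$ the stochastic logarithm $A$ with $dA=H^*\circ dH$ is only skew-Hermitian, so its diagonal entries are purely imaginary but in general \emph{nonzero}, whereas in the real case skew-symmetry forces $dA_{ii}=0$. Hence $(dN\,dA)_{ii}=\sum_{k}dN_{ik}\,dA_{ki}$ contains the term $dN_{ii}\,dA_{ii}$, which you silently drop by writing the sum over $k\neq i$; the identity $(dN\,dA)_{ii}=\sum_{k\neq i}\frac{2G(\la_i,\la_k)}{\la_i-\la_k}\,dt$ is therefore not justified as written. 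The paper closes this point by noting that $dN_{ii}$ is real while $dA_{ii}\in i\R$, so the covariation $dN_{ii}\,dA_{ii}$ is purely imaginary and cancels in the Hermitian symmetrization $\tfrac12\bigl(dN\,dA+(dN\,dA)^*\bigr)$, leaving only the real part $2\sum_{k\neq i}G(\la_i,\la_k)/(\la_i-\la_k)\,dt$ in $dF_{ii}$. With that one-line addition your argument coincides with the paper's proof and (\ref{eq:dlambdaC}) follows; your final formula is unaffected because the omitted term contributes nothing real.
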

\begin{proof}
We will need the following formula for the quadratic variation $ dX_{ij}dX_{kl} $
which is   computed from (\ref{eq:generalC}), using the fact that for a complex Brownian motion {$w_t$},
the quadratic variation processes satisfy { $dwdw=0$} and {$dwd\bar w  =2dt$}. 

\begin{equation}\label{eq:xxC}
d X_{ ij} d X_{kl}= 2\left( g^2(X)_{il}h^2(X)_{jk} + g^2(X)_{jk}h^2(X)_{il}\right)dt .
\end{equation}

Define $A$,  a stochastic logarithm of $H${,} by
$$
dA=H^{-1}\circ dH= H^*\circ dH.
$$
By It\^o formula applied to $H^* H=I$, the matrix $A$ is  skew-Hermitian.
In particular, the terms of $diag(A)$ 
are  purely imaginary (recall that in the real case they were 0).
By It\^o formula applied to $\Lambda =H^* XH$, we get,  setting $dN=H^*\circ dX\circ H$
\begin{equation*} 
d\Lambda= dN +  \Lambda\circ dA - dA\circ \Lambda.
\end{equation*}
We have
\formula{
dN= H^*dX H + \frac12(dH^*dX\, H + H^*dXdH)
}
so the process $N$ takes values in  $\mathcal{H}_p$. In particular its diagonal entries are real.
The process   $\Lambda\circ dA - dA\circ \Lambda$    is zero on the diagonal, so  $d\lambda_i=dN_{ii}$.
Moreover, when $i\not= j$, we have $0=dN_{ij}+(\lambda_i-\lambda_j)\circ dA_{ij}$ and
\begin{equation}\label{eq:daijC}
 dA_{ij}=\frac1{\lambda_j-\lambda_i}\circ dN_{ij}, \ \ \ i\not=j.
\end{equation}
The martingale  part of $dN$ equals the martingale part of $H^* dX\, H$ and by    formula (\ref{eq:xxC}) we obtain
\begin{equation}\label{eq:dndnC}
dN_{ij}dN_{km}=2\left(
g^2(\Lambda)_{im } h^2(\Lambda)_{jk } +
g^2(\Lambda)_{jk } h^2(\Lambda)_{im }\right)dt.
\end{equation}
From (\ref{eq:dndnC}) it follows that 
\begin{equation}\label{eq:dniidnjjC}
dN_{ii}dN_{jj}=4\delta_{ij}g^2(\la_i)h^2(\lambda_i)dt.
\end{equation}
Now  we compute the finite variation part  $dF$ of $dN$ 
 \begin{eqnarray*}
  dF &=& H^*b(X) Hdt + \frac12(dH^*dX\, H + H^*dXdH)\\
     &=& b(\Lambda)dt + \frac12 \left( (dH^*\,H)(H^*dX\, H)+(H^*dX\, H) (H^*dH)\right)\\
     &=& b(\Lambda)dt + \frac12(dNdA+(dNdA)^*).  
 \end{eqnarray*}
 
 Recall that $G(x,y)=g^2(x)h^2(y)+g^2(y)h^2(x)$.  We get
 \formula{
 (dNdA)_{ij}=\sum_k dN_{ik}dA_{kj}= 2\delta_{ij} \sum_{k\not=i} \frac{G(\lambda_i,\lambda_k)
}{\lambda_i-\lambda_k}dt + dN_{ij}dA_{jj}. 
 }
When $i=j$,  the term $dN_{ii}$ is real and $dA_{ii}\in i\R$. It follows that 
 \formula{
 dF_{ii}=b(\lambda_i)dt+2\sum_{k\not=i} \frac{G(\lambda_i,\lambda_k)}{\lambda_i-\lambda_k} dt.
 }
 Finally, using (\ref{eq:dniidnjjC}) and the last formula, there exist independent Brownian motions
 $\nu_i$, $i=1,\ldots,m$, such that  (\ref{eq:dlambdaC}) holds.
 \end{proof}
 %%%%%%%%%%%%%%%%%%%%%%%%%%%%%%%%%%%%%%%%%%%%%%%%%%%%%%%%%%%%%%%%%%%%%%%%%%%
The Theorem \ref{th:eigenC} may be applied in a special case $g(x)=\sqrt{x}$, $h(x)=1$ and $b(x)=2\delta>0$, when the equation (\ref{eq:generalC})
is the SDE for the complex Wishart process, called also a Laguerre process. This process
and its eigenvalues were studied   by K\"onig-O'Connell \cite{bib:konig} and Demni \cite{bib:demni}. 
%\pg {
%\begin{remark}
%\pg Surprisingly, the SDEs for the eigenvectors matrix $H_t$
% remain an open problem in the complex Hermitian case. As in the real case, we have
%  $$
%  dH=H\circ dA=HdA +\frac12 dHdA=HdA+\frac12H dAdA
%$$
%but this equation contains diagonal entries $dA_{jj}$ with purely imaginary values.
% By (\ref{eq:daijC}), SDEs for  $A_{ij}$ are easy to write, as in the real case.
% However, no information about SDEs verified by  $A_{jj}$  is available.
%\end{remark}
%}

\subsection{Collision time}
In this subsection we show that under some mild conditions on the functions $g,h$ and $b$ in the matrix  SDE  (\ref{eq:general}),
the eigenvalues of the process $X_t$ never collide.
\begin{theorem}\label{prop:collision}
Let $\Lambda=(\lambda_i)_{i=1\ldots p}$ be a process starting from $\lambda_1(0)<\ldots<\lambda_p(0)$ and satisfying (\ref{eq:dlambda}) with functions 
$b,g, h:\R\to \R$ such that $b,g^2,h^2$ are Lipschitz continuous   and $g^2h^2$ is convex or in class  $\mathcal{C}^{1,1}$.  Then the first collision time $\tau$ is infinite a.s.
\end{theorem}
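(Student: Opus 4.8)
The plan is to show that for each pair $i<j$, the process $\lambda_j-\lambda_i$ never hits $0$ by a comparison/Lyapunov argument on the logarithm of the gaps, controlling the singular drift terms using the regularity of $g^2h^2$. First I would fix a pair $i<j$ and write, from \eqref{eq:dlambda}, the SDE satisfied by the gap $D_{ij}=\lambda_j-\lambda_i$:
\formula{
dD_{ij}=2\bigl(g(\lambda_j)h(\lambda_j)d\nu_j-g(\lambda_i)h(\lambda_i)d\nu_i\bigr)+\Bigl(b(\lambda_j)-b(\lambda_i)+\sum_{k\not=j}\frac{G(\lambda_j,\lambda_k)}{\lambda_j-\lambda_k}-\sum_{k\not=i}\frac{G(\lambda_i,\lambda_k)}{\lambda_i-\lambda_k}\Bigr)dt.
}
Isolating the $k=i$ term from the first sum and the $k=j$ term from the second, these combine into $2G(\lambda_i,\lambda_j)/D_{ij}$, which is the \emph{repulsive} singular drift; the remaining terms (with $k\neq i,j$) are what must be controlled. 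The quadratic variation of $D_{ij}$ is $4\bigl(g^2(\lambda_i)h^2(\lambda_i)+g^2(\lambda_j)h^2(\lambda_j)\bigr)dt$, bounded on any compact region of the ordered chamber.

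Next I would apply It\^o's formula to $\log D_{ij}$ (valid before $\tau$) and sum over all pairs $i<j$, forming the Lyapunov function $S_t=-\sum_{i<j}\log(\lambda_j(t)-\lambda_i(t))$. The key algebraic step — this is the classical McKean/Rogers--Shi type computation — is that when one sums the singular drift contributions over all pairs, the dangerous cross terms cancel or organize into a nonpositive quantity. Concretely, the coefficient $2G(\lambda_i,\lambda_j)/D_{ij}^2$ coming from the $\log$ of the repulsive term of the pair $(i,j)$ itself has a favourable sign in $dS$, while the genuinely problematic terms are the ``third-particle'' terms of the form $\tfrac{G(\lambda_j,\lambda_k)}{(\lambda_j-\lambda_k)D_{ij}}$ with three distinct indices; grouping the three pairs drawn from $\{i,j,k\}$ and using that $G$ is symmetric and the partial-fraction identity $\tfrac1{(a-b)(a-c)}+\tfrac1{(b-a)(b-c)}+\tfrac1{(c-a)(c-b)}=0$, these terms are bounded once $G$ is controlled near the diagonal. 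Here is where I use the hypotheses: Lipschitz continuity of $g^2$, $h^2$ and convexity (or $\mathcal{C}^{1,1}$-regularity) of $g^2h^2$ give $G(x,y)=g^2(x)h^2(y)+g^2(y)h^2(x)=2g^2(x)h^2(x)+O(|x-y|)$ with a \emph{first-order} vanishing of the difference $g^2(x)h^2(x)+g^2(y)h^2(y)-G(x,y)$, so that quotients like $\bigl(g^2(\lambda_j)h^2(\lambda_j)-G(\lambda_i,\lambda_j)\bigr)/D_{ij}$ stay bounded; this is exactly the point where one-sided H\"older would not suffice and Lipschitz/$\mathcal{C}^{1,1}$ is needed.

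The remaining steps are routine localization. Restricting to a stopping time $\tau_n$ at which either $S$ exceeds $n$ or $|\lambda_1|+|\lambda_p|$ exceeds $n$, all the non-singular drift and diffusion coefficients are bounded, so from the above one gets a bound of the form $\ex[S_{t\wedge\tau_n}]\le S_0+C\int_0^t(1+\ex[S_{s\wedge\tau_n}])\,ds$ (the $\log$ being bounded below by an affine function of $S$ on the relevant set), whence by Gronwall $\ex[S_{t\wedge\tau_n}]$ is finite uniformly in $n$ on compacts; letting $n\to\infty$ forces $\tau=\lim\tau_n$ to be infinite a.s., since $S_t\to+\infty$ as $t\uparrow\tau$ on $\{\tau<\infty\}$. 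I expect the main obstacle to be the bookkeeping in the three-index cancellation: one must carefully split each pair's singular drift into a ``self'' part (good sign, to be discarded or kept negative) and ``third-particle'' parts, then re-sum over unordered triples and invoke the partial-fraction identity together with the first-order Taylor control of $G$ near the diagonal to see that no uncancelled $1/D_{ij}$ singularity survives in $dS_t$. The non-colliding property in the complex case (Corollary, via Theorem~\ref{th:eigenC}) and the $\beta$-case follow by the identical argument with the constant in front of the repulsive drift only improving the sign.
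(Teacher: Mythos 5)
Your overall strategy is the same as the paper's: form $U_t=-\sum_{i<j}\log(\lambda_j(t)-\lambda_i(t))$, show its finite-variation part cannot push it to $+\infty$ in finite time, and conclude by McKean's argument. However, at the one genuinely delicate point — the three-index drift terms — your argument has a gap. The partial-fraction identity $\tfrac1{(a-b)(a-c)}+\tfrac1{(b-a)(b-c)}+\tfrac1{(c-a)(c-b)}=0$ only produces a cancellation when the numerators are equal; here the triple $(i,j,k)$ contributes
\begin{equation*}
\frac{G(\lambda_j,\lambda_k)(\lambda_k-\lambda_j)-G(\lambda_i,\lambda_k)(\lambda_k-\lambda_i)+G(\lambda_i,\lambda_j)(\lambda_j-\lambda_i)}{(\lambda_j-\lambda_i)(\lambda_k-\lambda_i)(\lambda_k-\lambda_j)}\,,
\end{equation*}
and ``first-order control of $G$ near the diagonal'' (which already follows from Lipschitz continuity of $g^2,h^2$) does \emph{not} bound this when three or more particles approach each other simultaneously: after removing the part controlled by products of differences, what survives is essentially the second divided difference of $F(x)=2g^2(x)h^2(x)$ at $(\lambda_i,\lambda_j,\lambda_k)$, i.e. $\bigl(\tfrac{F(\lambda_j)-F(\lambda_i)}{\lambda_j-\lambda_i}-\tfrac{F(\lambda_k)-F(\lambda_j)}{\lambda_k-\lambda_j}\bigr)\tfrac{1}{\lambda_k-\lambda_i}$, and Lipschitz continuity of $F$ only gives a bound of order $1/(\lambda_k-\lambda_i)$, which is singular at a triple collision. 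This is exactly where the hypothesis ``$g^2h^2$ convex or $\mathcal{C}^{1,1}$'' must be used (convexity gives the favourable sign, $\mathcal{C}^{1,1}$ gives boundedness of the second divided difference); the paper's remark that for $p=2$ this hypothesis can be dropped confirms that its only role is the control of $\geq 3$-particle near-collisions. In the paper this is implemented by introducing $H(x,y,z)=[(g^2(x)-g^2(z))(h^2(y)-h^2(z))+(g^2(y)-g^2(z))(h^2(x)-h^2(z))](y-x)$, splitting $2dA^{(3)}=dA^{(4)}+dA^{(5)}$ with $|H|\le 2K^2|(y-x)(z-y)(z-x)|$ controlling $A^{(4)}$ and the convexity/$\mathcal{C}^{1,1}$ assumption handling $A^{(5)}$.

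Two related misstatements in your write-up: you attribute the need for convexity/$\mathcal{C}^{1,1}$ to the two-particle ``self'' terms, but there the exact identity $g^2(x)h^2(x)+g^2(y)h^2(y)-G(x,y)=(g^2(y)-g^2(x))(h^2(y)-h^2(x))$ together with Lipschitz continuity of $g^2$ and $h^2$ already bounds the relevant quotient (note it is divided by $D_{ij}^2$, not $D_{ij}$, and the specific quotient $(g^2(\lambda_j)h^2(\lambda_j)-G(\lambda_i,\lambda_j))/D_{ij}$ you wrote is not small at all — it behaves like $-g^2(\lambda_i)h^2(\lambda_i)/D_{ij}$). Also, in the convex case the favourable term is nonpositive but not bounded, so a two-sided Gronwall bound on $\ex[S_{t\wedge\tau_n}]$ is not the right closing step; the paper instead bounds the finite-variation part above (pathwise, using the global Lipschitz constants) and applies McKean's non-explosion argument to conclude $\tau=\infty$ a.s.
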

\begin{proof}
We define $U = -\sum_{i<j}\log(\lambda_j-\lambda_i)$ on $t\in[0,\tau]$. Applying It\^o formula, using (\ref{eq:dlambda}) and the fact that $d \lambda_id\lambda_j =\delta_{ij}4g^2(\lambda_i)h^2(\lambda_i)dt $ we obtain 
\formula{
  dU &= \sum_{i<j}\frac{d\lambda_i-d\lambda_j}{\lambda_j-\lambda_i}+\frac12\frac{d\left<\lambda_i,\lambda_i\right>+d\left<\lambda_j,\lambda_j\right>}{(\lambda_j-\lambda_i)^2} = dM + dA^{(1)}+dA^{(2)}+dA^{(3)}\/,
}
where
\formula{
   dM &= 2\sum_{i<j}\frac{g(\lambda_i)h(\lambda_i)d\nu_i-g(\lambda_j)h(\lambda_j)d\nu_j}{\lambda_j-\lambda_i}{,}\\
   dA^{(1)} & = \sum_{i<j}\frac{b(\lambda_i)-b(\lambda_j)}{\lambda_j-\lambda_i}\,dt{,}\\
   dA^{(2)} & = 2\sum_{i<j}\frac{(g^2(\lambda_j)-g^2(\lambda_i))(h^2(\lambda_j)-h^2(\lambda_i))}{(\lambda_j-\lambda_i)^2}\,dt{,} \\
   dA^{(3)} & = \sum_{i<j}\frac{1}{\lambda_j-\lambda_i}\sum_{k\neq i, k\neq j}\left(\frac{G(\lambda_i,\lambda_k)}{\lambda_i-\lambda_k}-\frac{G(\lambda_j,\lambda_k)}{\lambda_j-\lambda_k}\right)dt\\
   & = \sum_{i<j<k}\frac{G(\lambda_j,\lambda_k)(\lambda_k-\lambda_j)-G(\lambda_i,\lambda_k)(\lambda_k-\lambda_i)+G(\lambda_i,\lambda_j)(\lambda_j-\lambda_i)}{(\lambda_j-\lambda_i)(\lambda_k-\lambda_i)(\lambda_k-\lambda_j)}dt{.}
}
We will show that the finite variation part of $U$ is bounded on any interval $[0,t]$.
Lipschitz continuity of $b$, $g^2$ and $h^2$ implies that $|A^{(1)}_t|\leq K{p(p-1)}t/2$ and 
$|A^{(2)}_t|\leq K^2{p(p-1)}t$, where $K$ is a constant appearing in the Lipschitz condition. Observe also that if for every $x,y,z$ we set
\formula{
   H(x,y,z) = [(g^2(x)-g^2(z))(h^2(y)-h^2(z))+(g^2(y)-g^2(z))(h^2(x)-h^2(z))](y-x){,}
}
then $H(x,y,z) = (G(x,y)-G(x,z)-G(y,z)+G(z,z))(y-x)$ and 
\formula{
 &H(x,y,z)+H(y,z,x)-H(x,z,y) = 2(z-y)G(y,z)-2(z-x)G(x,z)\\
 &+2(y-x)G(x,y) +G(x,x)(z-y)-G(y,y)(z-x)+G(z,z)(y-x).
}
Using the last equality and the fact that $|H(x,y,z)|\leq 2K^2|(y-x)(z-y)(z-x)|$ we can write $2dA^{(3)}= dA^{(4)}+dA^{(5)}$, where $0\le A^{(4)}_t\leq K^2p(p-1)(p-2)t/6$ and
\formula{
  dA^{(5)}_t &= \sum_{i<j<k}\frac{G(\lambda_j,\lambda_j)(\lambda_k-\lambda_i)-G(\lambda_i,\lambda_i)(\lambda_k-\lambda_j)-G(\lambda_k,\lambda_k)(\lambda_j-\lambda_i)}{(\lambda_j-\lambda_i)(\lambda_k-\lambda_i)(\lambda_k-\lambda_j)}dt\\
  &= \sum_{i<j<k}\left(\frac{G(\lambda_j,\lambda_j)-G(\lambda_i,\lambda_i)}{\lambda_j-\lambda_i}-\frac{G(\lambda_k,\lambda_k)-G(\lambda_j,\lambda_j)}{\lambda_k-\lambda_j}\right)\frac{1}{\lambda_k-\lambda_i}dt
}
If $G(x,x)=2g^2(x)h^2(x)$ is convex then obviously the expression under the last sum  and $A^{(5)}$ is non-positive. When $G(x,x)$ is $\mathcal{C}^{1,1}$, (i.e. $|G'(x,x)-G'(y,y)|\leq C|x-y|$) then it is bounded by $C$ and  $|A^{(5)}_t|\leq C t$.   

Since finite-variation part of $U$ is finite whenever $t$ is bounded, applying McKean argument
(see  \cite{bib:McKean, bib:mayer}) 
we obtain that $U$ can not explode in finite time with positive probability and consequently $\tau=\infty$ a.s. 
\end{proof}
\begin{remark}
 Note that if $p= 2$ then the assumptions on $g^2h^2$ can be dropped since in that case $dA^{(3)}\equiv 0$.   
\end{remark}

 In the modern theory of particle systems it is important to consider and to study $\beta$-versions of a particle system
 given by the SDEs system  (\ref{eq:dlambda}), i.e. the solutions of the SDEs system
 \begin{equation}\label{eq:dlambdaBETA}
 d\la_i=2g(\la_i)h(\la_i)d\nu_i +\beta  \left(b(\lambda_i)+\sum_{k\not=i} \frac{G(\lambda_i,\lambda_k)
}
 {\lambda_i-\lambda_k}\right) dt,\ \ \ \beta>0. 
 \end{equation}
 Note that the system (\ref{eq:dlambdaBETA}) is for $\beta\not=1$  no longer of the form (\ref{eq:dlambda}), because
 $\beta G(x,y)\not= g^2(x)h^2(y)+ g^2(y)h^2(x)$. However, we have
 \begin{corollary}\label{cor:beta}
  Let $\Lambda=(\lambda_i)_{i=1\ldots p}$ be a process starting from $\lambda_1(0)<\ldots<\lambda_p(0)$ and satisfying (\ref{eq:dlambdaBETA}) with functions 
$b,g, h:\R\to \R$ such that $b,g^2,h^2$ are Lipschitz continuous   and $g^2h^2$ is convex or in class  $\mathcal{C}^{1,1}$. If $\beta \ge 1$ then the first collision time $\tau$ is infinite a.s.
 \end{corollary}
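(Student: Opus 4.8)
The plan is to repeat, with one extra bookkeeping step, the argument used for Theorem~\ref{prop:collision}, applied to the same auxiliary process $U=-\sum_{i<j}\log(\lambda_j-\lambda_i)$ on $[0,\tau)$. Since the martingale part of each $d\lambda_i$ in (\ref{eq:dlambdaBETA}) is identical with the one in (\ref{eq:dlambda}), we still have $d\lambda_id\lambda_j=4\delta_{ij}g^2(\lambda_i)h^2(\lambda_i)\,dt$, so It\^o's formula produces the same decomposition of $dU$ as in Theorem~\ref{prop:collision}, except that every drift contribution of the $\lambda_i$'s now carries the factor $\beta$. Thus the martingale part $dM$ is unchanged, the $b$-part gives $\beta\,dA^{(1)}$, and the contribution of the repulsion part of the drift, once the indices $k=i$ and $k=j$ are isolated exactly as before, splits into $\beta\,dA^{(3)}$ (the sum over $k\neq i,j$) together with $-2\beta\sum_{i<j}G(\lambda_i,\lambda_j)(\lambda_j-\lambda_i)^{-2}\,dt$ (the terms $k\in\{i,j\}$).

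The extra step is to combine this last piece with the ($\beta$-independent) It\^o-correction term $\frac12\sum_{i<j}(d\lambda_id\lambda_i+d\lambda_jd\lambda_j)/(\lambda_j-\lambda_i)^2$. Using the identity $G(x,x)+G(y,y)-2G(x,y)=2(g^2(x)-g^2(y))(h^2(x)-h^2(y))$ one recovers $dA^{(2)}$ plus a residual term, so that altogether
\formula{
  dU=dM+\beta\,dA^{(1)}+dA^{(2)}+\beta\,dA^{(3)}-2(\beta-1)\sum_{i<j}\frac{G(\lambda_i,\lambda_j)}{(\lambda_j-\lambda_i)^2}\,dt.
}
Now the estimates from the proof of Theorem~\ref{prop:collision} apply after an obvious rescaling: $|\beta A^{(1)}_t|$ and $|A^{(2)}_t|$ are bounded on $[0,t]$ by the Lipschitz constants of $b$, $g^2$, $h^2$, and writing $2\,dA^{(3)}=dA^{(4)}+dA^{(5)}$ as there, $\frac\beta2 A^{(4)}$ is non-negative and at most linear in $t$, while $\frac\beta2 A^{(5)}$ is non-positive in the convex case and bounded by a multiple of $t$ in the $\mathcal{C}^{1,1}$ case. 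The one genuinely new ingredient is the residual term: since $g^2\ge0$ and $h^2\ge0$ we have $G(\lambda_i,\lambda_j)=g^2(\lambda_i)h^2(\lambda_j)+g^2(\lambda_j)h^2(\lambda_i)\ge0$, so for $\beta\ge1$ that term is non-positive. Hence the finite-variation part of $U$ stays bounded from above on every interval $[0,t]$, which is exactly what the McKean argument (see \cite{bib:McKean,bib:mayer}) requires in order to exclude an explosion of $U$ to $+\infty$; therefore $\tau=\infty$ a.s.

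The step demanding the most care is the identification of the residual term: I would split off the indices $k=i,j$, combine them with the bracket term, and verify the algebraic identity above, in order to confirm that the residual appears precisely with the coefficient $-2(\beta-1)$. This pins down $\beta\ge1$ as the sharp hypothesis — for $\beta<1$ the same term would have the opposite sign and could drive $U$ to $+\infty$ — while everything else is a mere rescaling of the computation already carried out for Theorem~\ref{prop:collision}. It is also worth noting that, in contrast with the $p=2$ situation of the Remark where $dA^{(3)}\equiv0$, the residual term does \emph{not} vanish when $p=2$, so the condition $\beta\ge1$ is genuinely used even then, consistently with the classical non-collision threshold for $\beta$-ensembles.
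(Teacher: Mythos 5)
Your proof is correct and takes essentially the same route as the paper: the same process $U$ and the same decomposition of $dU$, with the residual term $2(1-\beta)\sum_{i<j}G(\lambda_i,\lambda_j)/(\lambda_j-\lambda_i)^2\,dt$ (which the paper absorbs into its modified $dA^{(2)}$) identified and shown non-positive for $\beta\ge 1$, all other estimates and the McKean argument carried over unchanged from Theorem~\ref{prop:collision}. No gaps.
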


\begin{proof}
  The proof is similar to the proof of the Theorem \ref{prop:collision}, with the decomposition $dU=dM + dA^{(1)}+dA^{(2)}+dA^{(3)}$
 given by
\formula{
   dM &= 2\sum_{i<j}\frac{g(\lambda_i)h(\lambda_i)d\nu_i-g(\lambda_j)h(\lambda_j)d\nu_j}{\lambda_j-\lambda_i}{,}\\
   dA^{(1)} & =\beta \sum_{i<j}\frac{b(\lambda_i)-b(\lambda_j)}{\lambda_j-\lambda_i}\,dt{,}\\
   dA^{(2)} & = 2\sum_{i<j}\frac{(g^2(\lambda_j)-g^2(\lambda_i))(h^2(\lambda_j)-h^2(\lambda_i))}{(\lambda_j-\lambda_i)^2}\,dt +
   2(1-\beta)\sum_{i<j}\frac{G(\lambda_i,\lambda_j)}{(\lambda_j-\lambda_i)^2},\\
   dA^{(3)} & = \beta\sum_{i<j}\frac{1}{\lambda_j-\lambda_i}\sum_{k\neq i, k\neq j}\left(\frac{G(\lambda_i,\lambda_k)}{\lambda_i-\lambda_k}-\frac{G(\lambda_j,\lambda_k)}{\lambda_j-\lambda_k}\right)dt.}
   The estimates of $M, A^{(1)}$, $A^{(3)}$ and of the first term of $A^{(2)}$ are identical as in the proof of Theorem \ref{prop:collision}. The term 
 $2(1-\beta)\sum_{i<j}\frac{G(\lambda_i,\lambda_j)}{(\lambda_j-\lambda_i)^2}$ is less or equal 0 for $\beta\ge 1$, so $A^{(2)}$ cannot explode
 to $+\infty$ and neither can $U$. 
 \end{proof}
 \begin{remark}
 The condition $\beta\ge 1$ is optimal in Corollary \ref{cor:beta}. It is known (\cite{bib:RShi,bib:KatoriSugaku}) that the Dyson Brownian motion
 defined as a solution of the SDEs system
 $$
 dY_i=d\nu_i + \beta\sum_{k\not=i}\frac{1/2}{Y_i-Y_k} dt, \ \ \ i=1,\ldots,p,
 $$
 has collisions for $\beta<1$. Note that taking $g(x)=1/2, h(x)=1, b(x)=0$ and $\beta=1$ the Dyson SDEs system is
 of the form  (\ref{eq:dlambda}) and  a general Dyson SDEs system is the $\beta$-version of the $\beta=1$ case. 
 \end{remark}
  Observe  that the  Theorem \ref{prop:collision} holds also  in the complex case. 
 \begin{corollary}\label{cor:complex}
  Under the hypotheses of Theorem \ref{prop:collision}, the solutions of the SDEs system (\ref{eq:dlambdaC}), i.e.
  the eigenvalues
 of the process $X_t$ on $\mathcal{H}_p$,
  verify $\tau=\infty$ a.s. It is also true for their $\beta$-versions with $\beta\ge \frac12$.
  \end{corollary}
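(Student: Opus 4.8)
The plan is to run the Lyapunov-function argument of the proof of Theorem~\ref{prop:collision} essentially verbatim, keeping track of the one place where the complex eigenvalue system~(\ref{eq:dlambdaC}) departs from the real system~(\ref{eq:dlambda}): the drift interaction term now carries a factor $2$. First I would note that, by~(\ref{eq:dniidnjjC}), the quadratic covariation is unchanged, $d\langle\lambda_i,\lambda_j\rangle=4\delta_{ij}g^2(\lambda_i)h^2(\lambda_i)\,dt$. Hence It\^o's formula applied to $U=-\sum_{i<j}\log(\lambda_j-\lambda_i)$ on $[0,\tau)$ yields a decomposition $dU=dM+dA^{(1)}+dA^{(2)}+dA^{(3)}$ of exactly the same shape as before, where $M$ is a local martingale, $A^{(1)}$ (the $b$-part) satisfies $|A^{(1)}_t|\le Kp(p-1)t/2$ by Lipschitz continuity of $b$, and $A^{(3)}$ --- the triple sum produced by the $k\neq i,j$ part of the interaction --- is now $2$ times the corresponding quantity in Theorem~\ref{prop:collision}; it is therefore still bounded above on every $[0,t]$, via the same splitting $2\,dA^{(3)}=dA^{(4)}+dA^{(5)}$ and the convexity (resp.\ $\mathcal{C}^{1,1}$) hypothesis on $g^2h^2$.

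The only term that really changes is $A^{(2)}$, which gathers the diagonal quadratic-variation contribution together with the $k=j$ term in $d\lambda_i$ and the $k=i$ term in $d\lambda_j$. In the complex case those two cross terms are doubled, so the $\{i,j\}$-contribution becomes
\formula{
 \frac{G(\lambda_i,\lambda_i)+G(\lambda_j,\lambda_j)-4G(\lambda_i,\lambda_j)}{(\lambda_j-\lambda_i)^2}\,dt
 &= \frac{2(g^2(\lambda_j)-g^2(\lambda_i))(h^2(\lambda_j)-h^2(\lambda_i))}{(\lambda_j-\lambda_i)^2}\,dt\\
 &\quad -\frac{2G(\lambda_i,\lambda_j)}{(\lambda_j-\lambda_i)^2}\,dt .
}
By Lipschitz continuity of $g^2$ and $h^2$ the first summand is bounded by $2K^2$, and the second is $\le 0$ since $G(\lambda_i,\lambda_j)=g^2(\lambda_i)h^2(\lambda_j)+g^2(\lambda_j)h^2(\lambda_i)\ge 0$. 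Thus the finite-variation part of $U$ is again bounded above on every $[0,t]$, and the McKean argument (see~\cite{bib:McKean,bib:mayer}) shows that $U$ cannot explode to $+\infty$ in finite time with positive probability, i.e.\ $\tau=\infty$ a.s.

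For the $\beta$-version one replaces the interaction coefficient $2$ by $2\beta$ (and $b$ by $\beta b$, which is immaterial since $b$ is Lipschitz and enters only through $A^{(1)}$), so the negative contribution above becomes $2(1-2\beta)G(\lambda_i,\lambda_j)/(\lambda_j-\lambda_i)^2\,dt$ --- precisely the analogue of the term $2(1-\beta)\sum_{i<j}G(\lambda_i,\lambda_j)/(\lambda_j-\lambda_i)^2$ occurring in the proof of Corollary~\ref{cor:beta}, now with $\beta$ replaced by $2\beta$. This is $\le 0$ exactly when $\beta\ge\frac12$, which is why the threshold drops from $1$ to $\frac12$; the remaining pieces $A^{(1)}$, $A^{(3)}$ and the first summand of $A^{(2)}$ are controlled exactly as before, with the fixed constant $\beta$ absorbed into the bounds. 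The only delicate point in the whole argument is this sign bookkeeping in $A^{(2)}$: one must verify that the extra multiplicity coming from the complex structure turns the relevant term into a non-positive one, which is where the hypotheses $g^2,h^2\ge 0$ and (for the $\beta$-version) $\beta\ge\frac12$ are used.
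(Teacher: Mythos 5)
Your argument is correct: the covariation identity (\ref{eq:dniidnjjC}) does give $d\langle\lambda_i,\lambda_j\rangle=4\delta_{ij}g^2(\lambda_i)h^2(\lambda_i)dt$, the doubled triple sum is handled by the same splitting $dA^{(4)}+dA^{(5)}$, and your rewriting of the pair term as $2(g^2(\lambda_j)-g^2(\lambda_i))(h^2(\lambda_j)-h^2(\lambda_i))/(\lambda_j-\lambda_i)^2+2(1-2\beta)G(\lambda_i,\lambda_j)/(\lambda_j-\lambda_i)^2$ is exactly right, with the sign condition producing the threshold $\beta\ge\frac12$. The paper, however, proves the corollary without recomputing anything: its entire proof is the observation that (\ref{eq:dlambdaC}) is the $\beta=2$-version of (\ref{eq:dlambda}) with the same $g,h$ and with $b/2$ in place of $b$ (and, correspondingly, the $\beta$-version of (\ref{eq:dlambdaC}) is the $2\beta$-version of (\ref{eq:dlambda})), so Corollary \ref{cor:beta} applies verbatim and gives $\tau=\infty$ precisely when $2\beta\ge 1$. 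So you re-run the Lyapunov computation of Theorem \ref{prop:collision} and Corollary \ref{cor:beta} inside the complex setting, which is self-contained and makes the crucial sign bookkeeping in $A^{(2)}$ explicit, at the cost of repeating estimates already proved; the paper's route is a one-line reparametrization that delegates all the analysis to the $\beta$-result. A minor difference: you keep $b$ where the paper's identification uses $b/2$, which is immaterial since $b$ is Lipschitz and enters only through $A^{(1)}$.
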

\begin{proof}
 Note that the system (\ref{eq:dlambdaC}) is the $\beta=2$-version of the system (\ref{eq:dlambda}) with the same $g$ and $h$
 and $b/2$ instead of $b$. 
\end{proof}

\subsection{Spectral matrix Yamada-Watanabe theorem}

\begin{theorem}\label{th:yamadamatrix}
 Consider the matrix  SDE (\ref{eq:general}) on $\mathcal{S}_p$
 \begin{equation*}
  dX_t=g(X_t) dB_t h(X_t) + h(X_t) dB_t^T g(X_t)  + b(X_t) dt
 \end{equation*}
 where $g,h,b:\R\rightarrow \R$ and $X_0\in  \tilde {\mathcal S}_p$.
 Suppose that 
 \begin{equation}\label{ineq:rho}
    | g(x)h(x)- g(y)h(y) )|^2\leq \rho(|x-y|)\/,\quad x,y\in\R\/,
 \end{equation} 
  where $\rho:(0,\infty)\to (0,\infty)$ is a  Borel function such that $\int_{0^+}\rho^{-1}(x)dx=\infty$,
   that the function $G(x,y)=g^2(x)h^2(y)+g^2(y)h^2(x)$ is locally Lipschitz
  and strictly positive on $\{x\not=y\}$ and that $b$ is   locally Lipschitz.
  Then, for $t<\tau$, the   pathwise uniqueness holds for the eigenvalue and eigenvector processes of $X_t$, solutions of the  SDEs
  system (\ref{eq:dlambda}) and  (\ref{eq:dh}).
  \end{theorem}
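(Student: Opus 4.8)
The plan is to observe that the coupled system (\ref{eq:dlambda})--(\ref{eq:dh}) is, locally in space, of the type covered by Theorem \ref{thm:betterYamada}: the ordered eigenvalues $\Lambda=(\lambda_i)_{i=1}^p$ play the role of the variables $Y$ (each $\lambda_i$ driven by the single Brownian motion $\nu_i$, with diffusion coefficient $\sigma_i(x)=2g(x)h(x)$ satisfying a $\rho$--condition, and drift $b(\lambda_i)+\sum_{k\ne i}G(\lambda_i,\lambda_k)/(\lambda_i-\lambda_k)$ depending on all of $\Lambda$), while the eigenvector entries $(h_{ij})$ play the role of the variables $Z$ (driven by the $\beta_{kj}$, with coefficients $h_{ik}\sqrt{G(\lambda_j,\lambda_k)}/(\lambda_j-\lambda_k)$ and $h_{ij}G(\lambda_j,\lambda_k)/(\lambda_k-\lambda_j)^2$, depending on $(\Lambda,H)$). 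The only discrepancy with the hypotheses of Theorem \ref{thm:betterYamada} is that the eigenvalue drift and the eigenvector coefficients are neither globally Lipschitz nor bounded: they blow up as two eigenvalues approach each other. So the argument must be run on the region where the eigenvalues stay well separated, and passed to the limit.

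Concretely, I would take two solutions $(\Lambda,H)$, $(\tilde\Lambda,\tilde H)$ of (\ref{eq:dlambda})--(\ref{eq:dh}) driven by the same Brownian motions $(\nu_i)_i$, $(\beta_{kj})_{k<j}$ and with the same initial value in $\tilde{\mathcal S}_p$, and introduce $R_n=\inf\{t\ge0:\ \lambda_{l+1}(t)-\lambda_l(t)\le 1/n\ \text{or}\ \tilde\lambda_{l+1}(t)-\tilde\lambda_l(t)\le1/n\ \text{for some}\ l,\ \text{or}\ \|\Lambda(t)\|\vee\|\tilde\Lambda(t)\|\ge n\}$, which increase a.s.\ to $\tau\wedge\tilde\tau$ (neither solution exploding before its first collision, the coefficients being locally Lipschitz off the diagonal). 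On $[0,R_n]$ both eigenvalue paths stay in the compact set $D_n=\{\lambda:\|\lambda\|\le n,\ \lambda_{l+1}-\lambda_l\ge1/n\ \text{for all}\ l\}$, and the entries of $H$ and $\tilde H$ lie in $[-1,1]$, since entries of orthonormal matrices are bounded by $1$. On $D_n$ the eigenvalue drift is Lipschitz (because $b$ and $G$ are locally Lipschitz and $|\lambda_i-\lambda_k|\ge1/n$), and the two eigenvector coefficients, as functions of $(\lambda,h)$, are Lipschitz on $D_n\times[-1,1]^{p^2}$: on $D_n$ the function $G$ is bounded below by a positive constant --- here the strict positivity of $G$ off the diagonal is used --- so $\sqrt G$ is Lipschitz there, $|\lambda_j-\lambda_k|\ge1/n$, and a product of bounded Lipschitz functions is Lipschitz. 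Finally (\ref{ineq:rho}) gives $|\sigma_i(x)-\sigma_i(y)|^2\le4\rho(|x-y|)$ with $\int_{0^+}(4\rho)^{-1}=\infty$.

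With these facts the proof of Theorem \ref{thm:betterYamada} transfers verbatim to the processes stopped at $R_n$: for the eigenvalue part, (\ref{eq:dlambda}) and the $\rho$--estimate give, via Lemma~3.3 of \cite{bib:ry99}, that the local time at $0$ of $(\lambda_i-\tilde\lambda_i)^{R_n}$ vanishes, Tanaka's formula removes the martingale term (localizing the stochastic integral further if needed), and the Lipschitz bound for the drift on $D_n$ together with Gronwall's lemma give $\lambda_i=\tilde\lambda_i$ on $[0,R_n]$ for every $i$; for the eigenvector part, knowing $\Lambda=\tilde\Lambda$ on $[0,R_n]$, the It\^o-isometry and Gronwall estimate from the second half of the proof of Theorem \ref{thm:betterYamada}, applied to each $h_{ij}$ (the coefficient of $\beta_{kj}$ now also depending on the pair $(i,j)$, which is immaterial as each coefficient is Lipschitz), give $H=\tilde H$ on $[0,R_n]$. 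Letting $n\to\infty$ yields $\Lambda=\tilde\Lambda$ and $H=\tilde H$ on $[0,\tau\wedge\tilde\tau)$, i.e.\ pathwise uniqueness for $t<\tau$ (in particular $\tau=\tilde\tau$ a.s.). I expect the main obstacle to be precisely the verification that the spectral drift and the eigenvector coefficients are locally Lipschitz on the separated region --- which is exactly what the hypotheses ``$G$ locally Lipschitz and strictly positive on $\{x\ne y\}$'' and ``$b$ locally Lipschitz'' are tailored to --- together with the somewhat fussy localization bookkeeping needed to make $R_n$ increase to the collision time.
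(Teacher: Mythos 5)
Your argument is correct and follows essentially the paper's route: both localize to the region where the eigenvalues stay separated (so that the spectral drift and the eigenvector coefficients become Lipschitz, the strict positivity of $G$ off the diagonal controlling $\sqrt{G}$) and then invoke the multidimensional Yamada--Watanabe Theorem \ref{thm:betterYamada} with $\sigma_i=2gh$ satisfying (\ref{ineq:rho}), passing to the limit up to the collision time. The only cosmetic difference is that the paper truncates and extends the coefficients from the compact sets $D_m$ to bounded Lipschitz functions on $\R^{p+r}$ and applies Theorem \ref{thm:betterYamada} as a black box (thereby also obtaining strong existence), whereas you stop the two given solutions at the exit times $R_n$ and rerun the theorem's proof on the stopped processes --- the same localization idea.
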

\begin{remark}
 The hypothesis {in Theorem \ref{th:yamadamatrix}} on the strict positivity of $G(x,y)$ off the diagonal $\{x=y\}$
 is equivalent to the condition that $g$ and $h$ have not more than one zero and
 their zeros  are not common.
\end{remark}

\begin{remark}
 In the matrix  SDE (\ref{eq:general}) the functions $g$ and $h$ appear  only in the martingale part, whereas
 in the equations (\ref{eq:dlambda}) and (\ref{eq:dh}) they intervene also in the finite variation part.
 That {is} why a Lipschitz condition  on the symmetrized function $g^2\oplus h^2$ cannot be avoided in a   spectral matrix
 Yamada-Watanabe theorem on $\mathcal{S}_p$.
\end{remark}
\begin{proof}
 We diagonalize $X_0=h_0\lambda_0 h_0^T$.   We will show that 
  the equations (\ref{eq:dlambda}) and (\ref{eq:dh}) have unique strong solutions when $\Lambda_0=\lambda_0$ and $H_0=h_0$. 
The functions
   \formula{
   b_{i}(\lambda_1,\ldots,\lambda_p) &= b(\la_i)+\sum_{k\neq i}\frac{G(\la_i,\la_k)}{\lambda_i-\lambda_k}{,} \\ 
   c_{ij}(\lambda_1,\ldots,\lambda_p,h_{11},h_{12},\ldots,h_{pp}) &= \delta_{kj}h_{ik} \frac{\sqrt{G(\la_j,\la_k)}}{\la_j-\la_k} {,}
  \\
   d_{ij}(\lambda_1,\ldots,\lambda_p,h_{11},h_{12},\ldots,h_{pp}) &= - \frac12 h_{ij} \sum_{k\not=j} \frac{G(\la_j,\la_k)}{(\la_k-\la_j)^2} 
   }
   are locally Lipschitz continuous on $D=\{0\leq \lambda_1< \lambda_{2}<\ldots<\lambda_p\}\times [-1,1]^r$, $r=p^2$. Thus, they can be extended from the compact sets 
   \formula{
      D_m = \{0\leq \lambda_1< \lambda_{2}<\ldots<\lambda_p<m, \lambda_{i+1}- \lambda_i\geq 1/m\}\times [-1,1]^r
   }
   to bounded Lipschitz continuous functions on $\R^{p+r}$. We will denote by $b_{i}^m$, $c_{ik}^m$ and $d_{ij}^m$ such extensions for $m=1,2,\ldots$. 
   
   We consider the following system of SDE (recall that $\beta_{kj}=\beta_{jk}$)
   \formula{
 d\lambda_i^m &= 2 g(\lambda_{i}^m)h(\lambda_{i}^m) d\nu_i +b_i^m(\Lambda^m)dt\/,\quad  i=1,\ldots,p\/,\\ 
     dh_{ij} &= \sum_{k\not=j} c_{ij}^m(\Lambda^m,H)\,d\beta_{kj}(t)+d_{ij}^m(\Lambda^m,H)\,dt\/,\ \ \ \  1\leq i,j\leq p\/.
 }
  Since  $| g(x)h(x)- g(y)h(y) )|^2\leq \rho(|x-y|)$  
  and $\int_{0^+}{\rho(x)^{-1}dx}=\infty$, by  Theorem~\ref {thm:betterYamada} with $q=\frac12p(p-1)$, we obtain that there exists a unique strong solution of the above-given system of SDEs. Using the fact that $D_m\subset D_{m+1}$, $\lim_{m\to \infty} D_m = D$ and the standard procedure we get that there exists a unique strong solution
  $(\Lambda_t,H_t)$ of the systems (\ref{eq:dlambda}) and (\ref{eq:dh}) up to the first exit time  from the set $D$. This time is almost surely
  equal to $\tau$, the first collision time of the eigenvalues.
\end{proof}
  The Theorems \ref{th:yamadamatrix} and \ref{prop:collision}  imply the following global strong existence result for eigenvalues and eigenvectors  of a matrix SDE on
  the space  $\mathcal{S}_p$.
  \begin{corollary}\label{cor:global}
   Suppose that  $b,g^2,h^2$ are Lipschitz continuous,  $g^2h^2$ is convex or in class  $\mathcal{C}^{1,1}$ 
   and that $G(x,y)$ is strictly positive on $\{x\not=y\}$. Then the  system of  SDEs  (\ref{eq:dlambda}) and  (\ref{eq:dh}) for eigenvalue and eigenvector processes  of the matrix process  on $\mathcal{S}_p$   given by (\ref{eq:general})
   admits a unique strong solution on $[0,\infty)$.
  \end{corollary}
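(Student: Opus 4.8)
The plan is to read off the corollary from the two main results of this section: Theorem~\ref{th:yamadamatrix} furnishes a unique strong solution of the coupled system (\ref{eq:dlambda})--(\ref{eq:dh}) up to the first collision time $\tau$, and Theorem~\ref{prop:collision} shows $\tau=\infty$ almost surely, so that this solution is in fact global.

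First I would check that the present hypotheses imply those of Theorem~\ref{th:yamadamatrix}. As $g^{2}$ and $h^{2}$ are Lipschitz and nonnegative, their product $g^{2}h^{2}$ is locally Lipschitz, hence so is $G(x,y)=g^{2}(x)h^{2}(y)+g^{2}(y)h^{2}(x)$; strict positivity of $G$ off the diagonal is assumed and $b$ is Lipschitz, in particular locally Lipschitz. The only point requiring an argument is the H\"older-type condition (\ref{ineq:rho}). Here I would invoke the elementary fact that if $\varphi\ge 0$ is $L$-Lipschitz on an interval then $\sqrt{\varphi}$ is there $\tfrac12$-H\"older with constant $\sqrt{L}$; applied to $\varphi=g^{2}h^{2}$ this bounds $\bigl|\,|g(x)h(x)|-|g(y)h(y)|\,\bigr|$, while the sign changes of $gh$ are absorbed by splitting at the at most two zeros of $g$ and $h$ lying between $x$ and $y$ --- the existence and mutual separation of these zeros being exactly the content of the remark following Theorem~\ref{th:yamadamatrix}. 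This yields $|g(x)h(x)-g(y)h(y)|^{2}\le C|x-y|$ (locally, which is all that the proof of Theorem~\ref{th:yamadamatrix} uses, the coefficients being frozen there on the compact sets $D_{m}$), i.e.\ (\ref{ineq:rho}) with $\rho(u)=Cu$, for which $\int_{0^{+}}\rho^{-1}(u)\,du=\infty$. Thus Theorem~\ref{th:yamadamatrix} applies: pathwise uniqueness holds for (\ref{eq:dlambda})--(\ref{eq:dh}) for $t<\tau$, and the $D_{m}$-exhaustion in its proof, together with Theorem~\ref{thm:betterYamada}, actually constructs a unique strong solution $(\Lambda_{t},H_{t})$ on $[0,\tau)$, whose eigenvalue component starts from $\lambda_{1}(0)<\dots<\lambda_{p}(0)$ since $X_{0}\in\tilde{\mathcal S}_{p}$.

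It remains to rule out a finite $\tau$. The hypotheses that $b,g^{2},h^{2}$ are Lipschitz and that $g^{2}h^{2}$ is convex or of class $\mathcal{C}^{1,1}$ are precisely those of Theorem~\ref{prop:collision}; applying it to the component $\Lambda_{t}$ just produced, the McKean non-explosion argument gives $\tau=\infty$ a.s. Consequently $(\Lambda_{t},H_{t})$ is defined on all of $[0,\infty)$ and, pathwise uniqueness holding for every $t<\tau=\infty$, it is the unique strong solution there. I expect the only step that is not mere bookkeeping to be the deduction of (\ref{ineq:rho}) from the Lipschitz continuity of $g^{2}$ and $h^{2}$, namely keeping track of the sign of $gh$ near its zeros; the rest is simply invoking the two quoted theorems in turn.
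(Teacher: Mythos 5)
Your proposal is correct and follows essentially the same route as the paper: verify the hypotheses of Theorem~\ref{th:yamadamatrix} (in particular deduce (\ref{ineq:rho}) with $\rho(u)=Cu$ from the fact that the square root of the Lipschitz function $g^{2}h^{2}$ is $\tfrac12$-H\"older), apply it on $[0,\tau)$, and then invoke Theorem~\ref{prop:collision} to get $\tau=\infty$ a.s. Your extra care about the sign changes of $gh$ near its zeros is a fair point that the paper's one-line argument glosses over, but it does not change the approach.
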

  \begin{proof}
  Recall that if a non-negative function $F$ is Lipschitz continuous then $\sqrt{F}$ is {$1/2$}-H\"older continuous.
 Observe that if $g^2$ and $h^2$ are Lipschitz continuous then $g^2h^2$ is locally Lipschitz and  $gh$ is {$1/2$}-H\"older. Thus  (\ref{ineq:rho})
 is verified and  the Theorem \ref{th:yamadamatrix} applies on $[0,\tau)$. By  Theorem \ref{prop:collision}, $\tau=\infty$
 almost surely.
  \end{proof}
  \begin{remark}\label{dimensions}
   Theorem \ref{th:yamadamatrix} and Corollary \ref{cor:global} establish the pathwise uniqueness and strong existence of   eigenvalue and eigenvector processes $\Lambda_t$
   and $H_t$ 
   of the process $X_t$.  It is an open question whether  the   pathwise uniqueness and strong existence hold for the matrix SDE  (\ref{eq:general}) itself.
   Note that the process $X_t$ takes values in the space ${\mathcal S}_p$ of dimension \\ $p(p+1)/2$ whereas the Brownian matrix in the SDE (\ref{eq:general})
   contains $p^2$ independent Brownian motions. Thus we have a   redundance phenomenon in the matrix SDE   (\ref{eq:general}). The SDEs system (\ref{eq:dlambda}) and  (\ref{eq:dh}) for $(\Lambda_t,H_t)$ has the advantage to be non-redundant, because it contains exactly
    $p(p+1)/2$ independent Brownian motions. 

  \end{remark}

%%%%%%%%%%%%%%%%%%%%%%%%%%%%%%%%%%%%%%%%%%%%%%%%%%%%%%%%%%%%%%%%%%%%%%%%%%%%%%%%%%%%%%%%%%%%%%%%%%%%%%%%%%%%%%%%%%%%%%%%%
%%%%%%%%%%%%%%%%%%%%%%%%%%%%%%%%%%%%%%%%%%%%%%%%%%%%%%%%%%%%%%%%%%%%%%%%%%%%%%%%%%%%%%%%%%%%%%%%%%%%%%%%%%%%%%%%%%%%%%%
%%%%%%%%%%%%%%%%%%%%%%%%%%%%%%%%%%%%%%%%%%%%%%%%%%%%%%%%%%%%%%%%%%%%%%%%%%%%%%%%%%%%%%%%%%%%%%%%%%%%%%%%%%%%%%%%%%%%%%
\section{Applications}\label{sec:appli}

\subsection{Noncolliding particle systems of squared Bessel processes }\label{sec:systemsBessel}

  In a recent paper by  Katori,Tanemura \cite{bib:katori2011},
particle systems of squared Bessel processes BESQ$^{(\nu)}$, $\nu>-1$, interacting with each other
by {\it long ranged repulsive forces} are studied.  If there are $N$ particles, their positions $X_i^{(\nu)}$
are given by  the following system of SDEs, see \cite{bib:katori2011} p.593:
\begin{eqnarray*}
 dX_i^{(\nu)}(t)&=&2\sqrt{X_i^{(\nu)}(t)} dB_i(t) + 2(\nu +1)dt + 4X_i^{(\nu)}(t)\sum_{j\not=i}\frac{dt} {X_i^{(\nu)}(t)-X_j^{(\nu)}(t)} \\
 &=&2\sqrt{X_i^{(\nu)}(t)} dB_i(t) + 2(\nu +N)dt +  2\sum_{j\not=i}\frac{X_i^{(\nu)}(t)+X_j^{(\nu)}(t) } {X_i^{(\nu)}(t)-X_j^{(\nu)}(t)}dt, \  i=1,\ldots, N
\end{eqnarray*}
with a collection of independent standard Brownian motions $\{B_i(t), i=1,\ldots, N\}$ and, if $-1<\nu<0$, with a reflection wall
at the origin. Theorem  \ref{th:eigenC} implies that  the processes $X_i^{(\nu)}(t)$
are the  eigenvalues of a complex Wishart (or Laguerre) process, with shape parameter $\delta=\nu +N$, see the end of the Section 3.2. It may be also seen
as a $\beta$-version of the real Wishart eigenvalue process, with $\beta=2$.

\begin{theorem}\label{th:PartSystem}
 The system of SDEs for a particle system of $N$ squared Bessel processes BESQ$^{(\nu)}$, with $0\le X_i^{(\nu)}(0) < X_2^{(\nu)}(0)<\ldots
 <X_N^{(\nu)}(0)$  admits a unique strong solution on $[0,\infty)$
 for $\nu \ge -1$.
\end{theorem}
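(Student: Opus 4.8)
The plan is to recognize the particle system as the eigenvalue SDE \eqref{eq:dlambda} (in its $\beta=2$ complex form \eqref{eq:dlambdaC}) for the choice $g(x)=\sqrt{x}$, $h(x)=1$, $b(x)=2(\nu+N)$, restricted to the cone $\{0\le \lambda_1<\lambda_2<\dots<\lambda_N\}$, and then to apply the collision-time result and the spectral Yamada-Watanabe theorem. First I would check the identification: with $g(x)=\sqrt x$, $h(x)=1$ one has $G(x,y)=x+y$, so the drift term $\sum_{k\ne i}G(\lambda_i,\lambda_k)/(\lambda_i-\lambda_k)=\sum_{k\ne i}(\lambda_i+\lambda_k)/(\lambda_i-\lambda_k)$, and the diffusion coefficient $2g(\lambda_i)h(\lambda_i)=2\sqrt{\lambda_i}$; after the algebraic rearrangement $4X_i\sum_{j\ne i}1/(X_i-X_j)=2(N-1)+2\sum_{j\ne i}(X_i+X_j)/(X_i-X_j)$ already recorded in the excerpt, Katori-Tanemura's system with drift constant $2(\nu+1)$ becomes exactly \eqref{eq:dlambdaC} with $b\equiv 2(\nu+N)$ (equivalently, the $\beta=2$-version of \eqref{eq:dlambda} with $b/2=\nu+N$, as the remark after Corollary \ref{cor:complex} makes precise).

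Next I would dispose of non-collision. Here $G(x,x)=2x$ is linear, hence convex, and $b$, $g^2(x)=x$, $h^2(x)=1$ are all Lipschitz, so Corollary \ref{cor:complex} (the complex, equivalently $\beta\ge\tfrac12$, version of Theorem \ref{prop:collision}) applies and gives $\tau=\infty$ a.s.; in particular the particles also do not hit each other at the origin. For pathwise uniqueness I would invoke Theorem \ref{th:yamadamatrix} (or rather its proof scheme via Theorem \ref{thm:betterYamada}): the coefficient $gh(x)=\sqrt x$ satisfies $|\sqrt x-\sqrt y|^2\le |x-y|$, so \eqref{ineq:rho} holds with $\rho(u)=u$ and $\int_{0^+}u^{-1}du=\infty$; $G(x,y)=x+y$ is globally Lipschitz and strictly positive off the diagonal on the relevant region (its only zero is at $x=y=0$, which is an endpoint, not an interior collision), and $b$ is constant hence Lipschitz. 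Thus the truncated/localized system on the compacts $D_m=\{0\le\lambda_1<\dots<\lambda_N<m,\ \lambda_{i+1}-\lambda_i\ge 1/m\}$ has a unique strong solution by Theorem \ref{thm:betterYamada}, and letting $m\to\infty$ gives a unique strong solution up to the first exit from $D=\{0\le\lambda_1<\dots<\lambda_N\}$, which by the previous paragraph is a.s. $+\infty$.

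One point needs a little care and I expect it to be the main (mild) obstacle: the boundary behaviour at the origin when $-1<\nu<0$, where the smallest particle meets a reflecting wall. The coefficient $\sqrt{\lambda_1}$ is $1/2$-Hölder but not Lipschitz at $0$, so $\lambda_1$ alone behaves like a squared Bessel process BESQ$^{(\nu)}$ with $\nu\in(-1,0)$ and genuinely reaches $0$; one must argue that uniqueness is not destroyed there. This is handled exactly by the structure of Theorem \ref{thm:betterYamada}: the bound $|\sqrt x-\sqrt y|^2\le|x-y|=\rho(|x-y|)$ is valid for \emph{all} $x,y\ge 0$ including $0$, and the Le Gall/Revuz-Yor local-time argument used in its proof needs only this Hölder-type control, together with boundedness of the drift (here constant); the reflection at $0$ is precisely the BESQ$^{(\nu)}$ boundary behaviour, for which pathwise uniqueness is classical (Yamada-Watanabe, Theorem \ref{thm:yamada}). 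So the proof reduces to: (i) the algebraic identification with \eqref{eq:dlambdaC}; (ii) Corollary \ref{cor:complex} for $\tau=\infty$; (iii) Theorem \ref{th:yamadamatrix}/\ref{thm:betterYamada} for pathwise uniqueness and strong existence up to $\tau$; and (iv) the remark that the origin causes no loss of uniqueness because the Hölder bound on $\sqrt{\cdot}$ extends down to $0$.
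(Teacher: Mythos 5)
Your identification of the particle system with the $\beta=2$ (complex) eigenvalue SDE \eqref{eq:dlambdaC} for $g(x)=\sqrt x$, $h(x)=1$, $b\equiv 2(\nu+N)$, the use of Corollary \ref{cor:complex} to get $\tau=\infty$, and the localized application of Theorem \ref{thm:betterYamada} on the sets $D_m$ all match the paper's strategy. However, there is a genuine gap at exactly the point you dismiss as a mild obstacle: you never actually prove that the smallest particle remains non-negative, i.e.\ that the solution does not leave the region $\{\lambda_1\ge 0\}$, and this is precisely where the hypothesis $\nu\ge -1$ enters (nowhere in your argument is $\nu\ge-1$ used). The appeal to ``classical BESQ boundary behaviour'' via Theorem \ref{thm:yamada} does not work: $X_1^{(\nu)}$ is not a squared Bessel process on its own, since besides the constant $2(\nu+N)$ it carries the interaction drift $2\sum_{j\ne 1}(X_1+X_j)/(X_1-X_j)$, which is negative and equal to $-2(N-1)$ at the wall, so the effective drift at $\lambda_1=0$ is $2(\nu+1)$ and non-negativity must be argued for the coupled system, not coordinate-wise. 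The paper handles this by first replacing $\sqrt{X_i}$ by $\sqrt{|Y_i|}$ and $X_i+X_j$ by $|Y_i|+|Y_j|$, so that the system is defined on all of $\R^N$ (this device is needed even to speak of a solution that might cross $0$, since $\sqrt x$ is undefined for $x<0$), obtaining a unique strong solution of the extended system by non-collision plus Theorem \ref{thm:betterYamada} with localization, and then proving a posteriori that $Y_1^{(\nu)}\ge 0$ when $\nu>-1$: the local time of $Y_1^{(\nu)}$ at $0$ vanishes, and a stopping-time argument with Tanaka's formula (the drift stays positive, equal to $\nu+1>0$ at the would-be crossing time and up to the stopping time $\kappa$) forces $\ex\bigl(Y_1^{(\nu)}\bigr)^-\le 0$, a contradiction. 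Your proposal contains no substitute for this step.

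In addition, the case $\nu=-1$, which is included in the statement, needs a separate argument that you do not give: there the boundary drift $\nu+1$ is $0$ and the Tanaka argument yields nothing. The paper instead defines $\tilde Y_1^{(\nu)}$ equal to $Y_1^{(\nu)}$ before the hitting time of $0$ and equal to $0$ afterwards, checks that $(\tilde Y_1^{(\nu)},Y_2^{(\nu)},\ldots,Y_N^{(\nu)})$ solves the same SDE system, and concludes $Y_1^{(\nu)}=\tilde Y_1^{(\nu)}\ge 0$ from the already-established pathwise uniqueness. The rest of your plan (the algebraic identification, convexity of $G(x,x)=2x$ and Lipschitz continuity of $b$, $g^2$, $h^2$ for the collision result, the H\"older bound $|\sqrt x-\sqrt y|^2\le |x-y|$, and Lipschitz drifts after localization away from collisions) is correct and coincides with what the paper does.
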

\begin{proof}
Like for a  Squared Bessel process on $\R^+$, one must start with the following system of SDEs  
\begin{eqnarray*}
 dY_i^{(\nu)}(t)=2\sqrt{|Y_i^{(\nu)}(t)|} dB_i(t) + 2(\nu +N)dt +  2\sum_{j\not=i}\frac{|Y_i^{(\nu)}(t)|+|Y_j^{(\nu)}(t)| } {Y_i^{(\nu)}(t)-Y_j^{(\nu)}(t)}, \  i=1,\ldots, N,
\end{eqnarray*}
which is well defined on $\R^N$ up to the first collision time $\tau$. We suppose that $0\le Y_1^{(\nu)}(0)<Y_2^{(\nu)}(0)<\ldots<Y_N^{(\nu)}(0)$.
It follows from the  Corollary  \ref{cor:complex} that the collision time for the processes  $(Y_i^{(\nu)}(t)),\ i=1,\ldots, N$ is
 $\tau=\infty$ a.s. 
 
 First suppose that $\nu>-1$. 
 The Theorem \ref{thm:betterYamada} applied to the last system, with a standard use of localization
techniques as in the proof of Theorem  \ref{th:yamadamatrix}, gives the existence of a pathwise unique
strong solution $(Y_i^{(\nu)}(t))$. It remains to show that $Y_1^{(\nu)}(t)\ge 0$ for all $t>0$.

Denote 
$$
b_1(t)=\nu+N +\sum_{j\not=1}\frac{|Y_1^{(\nu)}(t)|+|Y_j^{(\nu)}(t)| } {Y_1^{(\nu)}(t)-Y_j^{(\nu)}(t)}
$$
We define two stopping times
\begin{eqnarray*}
\vartheta&=&\inf\{t>0|\ Y_1^{(\nu)}(t) <0\};\\
\kappa&=&\inf\{t>\vartheta|\ b_1(t)=0\}.
\end{eqnarray*}
Suppose that $\pr(\vartheta<\infty)>0$. Then there exists $T>0$ such that 
$\pr(\vartheta<T)>0$.
As $Y_1^{(\nu)}(\vartheta)=0$ and $b_1(\vartheta)=\nu +N-(N-1)=\nu+1>0$,
we see that if $\vartheta<\infty$ then  $\kappa>\vartheta$.

It follows from \cite{bib:ry99}Lemma 3.3, p.389
that the local time $L^0(Y_1^{(\nu)})=0$.  
Using Tanaka's formula \cite{bib:ry99}VI(1.2)  we obtain for $t\ge 0$
\begin{eqnarray*}
\ex(Y_1^{(\nu)}((\vartheta+t)\wedge \kappa\wedge T))^-=
- \ex\int_{\vartheta\wedge T}^{(\vartheta+t)\wedge \kappa\wedge T}
{\bf 1}_{\{Y_1^{(\nu)}(s)\le 0\}}dY_1^{(\nu)\}}(s)\\=
-2 \ex\int_{\vartheta\wedge T}^{(\vartheta+t)\wedge \kappa\wedge T}
{\bf 1}_{\{Y_1^{(\nu)}(s)\le 0\}}b_1(s)ds \le 0.
\end{eqnarray*}
In the last inequality we used the fact that $b_1(s)>0$ when $\vartheta\le s<\kappa$. Thus
$$Y_1^{(\nu)}((\vartheta+t)\wedge \kappa\wedge T) \ge 0$$
 for $t>0$ which contradicts the definition of
$\vartheta$. We deduce that $\vartheta=\infty$ almost surely.\\

In the case $\nu=-1$, let $T_0=\inf\{t>0|\ Y_1^{(\nu)}(t)=0\}$. Observe that
if $T_0<\infty$, then $b_1(T_0)=0$.
Define $\tilde Y_1^{(\nu)}(t)=Y_1^{(\nu)}(t)$ when $t<T_0$ and 
$\tilde Y_1^{(\nu)}(t)=0$ when $t\ge T_0$.
Then $(\tilde Y_1^{(\nu)}, Y_2^{(\nu)},\ldots,Y_N^{(\nu)})$ is a solution of
the same SDE system as $( Y_1^{(\nu)}, Y_2^{(\nu)},\ldots,Y_N^{(\nu)})$.
Consequently, by Theorem  \ref{thm:betterYamada}, we have $Y_1^{(\nu)}=\tilde Y_1^{(\nu)}\ge 0$.
\end{proof}
 %%%%%%%%%%%%%%%%%%%%%%%%%%%%%%%%%%%%%%%%%%%%%%%%%%%%%%%%%%%%%%%%%%%%%%%%%%%%%%%%%%  
   
\subsection{Wishart  stochastic differential equations} \label{sec:wishart}
Wishart processes on $\overline{{\mathcal S}_p^+}$ are matrix analogues of Squared Bessel processes on $\R^+$.
Wishart processes with shape parameter $n$ (which corresponds to the dimension of a BESQ on $\R^+$)  are simply constructed as $X_t=N_t^T\, N_t$
where $N_t$ is an $n\times p$ Brownian matrix. 
Let $\alpha>0$ and $B=(B_t)_{t\geq 0}$ be a Brownian $p\times p$ matrix. The Wishart stochastic differential equation for a Wishart process with
a shape parameter $\alpha$ is
\formula[eq:WSDE]{
\left\{
\begin{array}{rcl}
  dX_t &=& \sqrt{X_t}dB_t+dB_t^T \sqrt{X_t}+\alpha I dt\\
  X_0 &=& x_0\/.
\end{array}
\right.
}
It was introduced by Bru \cite{bib:b91} by first writing the SDE for  $X_t=N_t^T\, N_t$ and next replacing the parameter $n$ by $\alpha$.
It was shown in \cite{bib:b91} that if $x_0\in \overline{ {\mathcal S}^+_p}$ and $\alpha > p-1 $ then there exists a unique weak solution of (\ref{eq:WSDE}).
Also according to \cite{bib:b91}, the conditions $\alpha\geq p+1$
and $x_0\in   {\mathcal S}^+_p$ imply that (\ref{eq:WSDE}) has a unique strong solution. We reinforce considerably   these results.\\

Our methods apply to
  the following matrix stochastic differential equation
\formula[eq:GenWishart:SDE]{
   dY_t = \sqrt{|Y_t|}dB_t+dB^T_t\sqrt{|Y_t|}+\alpha Idt}
where $\alpha\in \R$,  $Y_0= y_0 \in \tilde S_p$  and $|Y_t|$ is defined by taking absolute values of eigenvalues of $Y_t$, see the beginning of Section 3. We have $g(x)=\sqrt{|x|}$, $h(x)=1$ and $G(x,y)=|x| + |y|$ for $x,y\in\R $. These functions satisfy the hypotheses of     Theorems \ref{prop:collision} and  \ref{th:yamadamatrix}.  

By Theorem \ref{th:eigen}, the eigenvalues of the  generalized Wishart process $Y_t$ verify
the following system of SDEs
$$
d\la_i=2 \sqrt {|\la_i|} d\nu_i +\left( \alpha     +\sum_{k\not=i} \frac{| \lambda_i|+|\lambda_k|
}
 {\lambda_i-\lambda_k}\right) dt. 
$$

First, using   Theorem    \ref{prop:collision} we obtain
\begin{corollary}
\label{cor:collisionWishart}
  For $\alpha \in \R$ and $ \lambda_1(0)<\lambda_2(0)<\ldots<\lambda_p(0) $ the eigenvalues $\lambda_i(t)$ never collide, i.e. the first collision time $\tau=\infty $ almost surely. 
\end{corollary}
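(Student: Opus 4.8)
The plan is to apply Theorem \ref{prop:collision} directly to the eigenvalue system
$$
d\la_i=2 \sqrt {|\la_i|}\, d\nu_i +\Bigl( \alpha     +\sum_{k\not=i} \frac{| \lambda_i|+|\lambda_k|}{\lambda_i-\lambda_k}\Bigr) dt,
$$
which is exactly of the form (\ref{eq:dlambda}) with the choices $g(x)=\sqrt{|x|}$, $h(x)\equiv 1$ and $b(x)\equiv\alpha$, so that $G(x,y)=g^2(x)h^2(y)+g^2(y)h^2(x)=|x|+|y|$. Thus all that is required is to verify the hypotheses of Theorem \ref{prop:collision}: that $b$, $g^2$ and $h^2$ are Lipschitz continuous, and that $g^2h^2$ is convex or of class $\mathcal{C}^{1,1}$.

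The verification is routine. The function $b(x)=\alpha$ is constant, hence Lipschitz; $g^2(x)=|x|$ is $1$-Lipschitz; $h^2(x)=1$ is Lipschitz. For the last condition, $g^2(x)h^2(x)=|x|$, and the function $x\mapsto|x|$ is convex on $\R$. Hence every hypothesis of Theorem \ref{prop:collision} is met, for every real $\alpha$.

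Applying Theorem \ref{prop:collision} with these data, and using the assumption $\lambda_1(0)<\lambda_2(0)<\ldots<\lambda_p(0)$, we conclude that the first collision time satisfies $\tau=\infty$ almost surely. There is no real obstacle here; the only point to keep in mind is that we use the \emph{generalized} Wishart equation (\ref{eq:GenWishart:SDE}) with $\sqrt{|Y_t|}$, so that the coefficient functions are defined on all of $\R$ and the convexity of $|x|$ (rather than of $x$, which would fail to cover negative arguments) is what makes the argument go through for the whole range $\alpha\in\R$.
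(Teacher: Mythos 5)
Your proposal is correct and is exactly the paper's argument: the paper also treats the generalized Wishart equation (\ref{eq:GenWishart:SDE}) with $g(x)=\sqrt{|x|}$, $h(x)=1$, $b(x)=\alpha$, notes that these satisfy the hypotheses of Theorem \ref{prop:collision} (with $G(x,y)=|x|+|y|$ and $g^2h^2(x)=|x|$ convex), and concludes $\tau=\infty$ a.s. Your verification of the Lipschitz and convexity conditions just spells out what the paper leaves implicit.
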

Next, the  Theorem \ref{th:yamadamatrix} implies
\begin{corollary}
The SDEs system for the eigenvalues and eigenvectors $(\Lambda_t,H_t)$  corresponding to the generalized Wishart SDE (\ref{eq:GenWishart:SDE}) with $Y_0=y_0\in \tilde{\mathcal S}_p$  has a unique strong solution on $[0,\infty)$  for any $\alpha \in \R$.
\end{corollary}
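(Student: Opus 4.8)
The statement is the specialization of Corollary \ref{cor:global} to the coefficients $g(x)=\sqrt{|x|}$, $h(x)=1$, $b(x)=\alpha$ of the generalized Wishart SDE (\ref{eq:GenWishart:SDE}), so the plan is simply to check that these coefficients satisfy the hypotheses of that corollary (equivalently, of Theorems \ref{th:yamadamatrix} and \ref{prop:collision}) and then read off the conclusion; this is the exact analogue of the proof of Corollary \ref{cor:global} in the case at hand.

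First I would verify the hypotheses of the spectral matrix Yamada-Watanabe theorem. The map $x\mapsto g(x)h(x)=\sqrt{|x|}$ satisfies $|g(x)h(x)-g(y)h(y)|^2=(\sqrt{|x|}-\sqrt{|y|})^2\le \big||x|-|y|\big|\le|x-y|$, so (\ref{ineq:rho}) holds with $\rho(u)=u$, and indeed $\int_{0^+}u^{-1}\,du=\infty$. The function $G(x,y)=g^2(x)h^2(y)+g^2(y)h^2(x)=|x|+|y|$ is globally Lipschitz, hence locally Lipschitz, and is strictly positive on $\{x\neq y\}$ since $x\neq y$ forces $|x|+|y|>0$; and $b\equiv\alpha$ is trivially locally Lipschitz. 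Hence Theorem \ref{th:yamadamatrix} applies and yields pathwise uniqueness together with a unique strong solution $(\Lambda_t,H_t)$ of the systems (\ref{eq:dlambda}) and (\ref{eq:dh}) on the stochastic interval $[0,\tau)$, where $\tau$ is the first collision time of the eigenvalues.

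It then remains to show that $\tau=\infty$ a.s. For this I would invoke Corollary \ref{cor:collisionWishart}, which is a consequence of Theorem \ref{prop:collision}: here $b=\alpha$, $g^2(x)=|x|$ and $h^2\equiv1$ are all Lipschitz, and $g^2h^2(x)=|x|$ is convex. The only mild point is that $|x|$ is not of class $\mathcal{C}^{1,1}$ at the origin, but convexity is precisely the alternative hypothesis allowed in Theorem \ref{prop:collision}, so no difficulty arises. Since $y_0\in\tilde{\mathcal S}_p$ its eigenvalues are pairwise distinct, so Corollary \ref{cor:collisionWishart} gives $\tau=\infty$ a.s. Combining the two steps, the unique strong solution $(\Lambda_t,H_t)$ is in fact defined on all of $[0,\infty)$, which is the assertion. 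There is no genuine obstacle here beyond this bookkeeping of hypotheses.
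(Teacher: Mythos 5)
Your proposal is correct and follows essentially the same route as the paper: the authors likewise note that $g(x)=\sqrt{|x|}$, $h(x)=1$, $b(x)=\alpha$ (so $G(x,y)=|x|+|y|$) satisfy the hypotheses of Theorems \ref{prop:collision} and \ref{th:yamadamatrix}, obtain $\tau=\infty$ via Corollary \ref{cor:collisionWishart}, and then read off the global strong existence and uniqueness exactly as you do. Your explicit verification of the $1/2$-H\"older bound with $\rho(u)=u$ and of the convexity of $|x|$ is just the bookkeeping the paper leaves implicit.
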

In order to consider the equation  (\ref{eq:WSDE}), we must prove the non-negativity of the smallest eigenvalue
 of the process $Y_t$, when starting from a non-negative value. 
\begin{proposition}\label{prop:hit:zero}
If $\alpha\geq p-1$ and $\lambda_1(0)\ge 0$ then the process $\lambda_1(t)$ remains non-negative.
\end{proposition}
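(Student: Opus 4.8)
The plan is to adapt the one-dimensional comparison-with-zero argument already used in the proof of Theorem~\ref{th:PartSystem} to the smallest eigenvalue $\lambda_1(t)$ of the generalized Wishart system. The key observation is that in the SDE
\[
 d\lambda_1 = 2\sqrt{|\lambda_1|}\, d\nu_1 + \Bigl(\alpha + \sum_{k\neq 1}\frac{|\lambda_1|+|\lambda_k|}{\lambda_1-\lambda_k}\Bigr)dt,
\]
whenever $\lambda_1 = 0$ (so $\lambda_1 < \lambda_k$ for all $k\neq 1$, by Corollary~\ref{cor:collisionWishart}), the drift coefficient equals
\[
 b_1 = \alpha + \sum_{k\neq 1}\frac{|\lambda_k|}{0-|\lambda_k|} = \alpha - (p-1) \ge 0.
\]
So the drift is non-negative precisely at the boundary, which is exactly the condition that pushes the process back into $[0,\infty)$.

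First I would work with the localized system from the proof of Theorem~\ref{th:yamadamatrix}, so that a strong solution $(\lambda_i(t))$ exists up to the collision time $\tau$, and invoke Corollary~\ref{cor:collisionWishart} to get $\tau=\infty$ a.s. Next, since $\sigma_1(x)=2\sqrt{|x|}$ is bounded near $0$ and, with the ordering strict, $\int_0^t \sigma_1(\lambda_1(s))^2\rho_1^{-1}(\lambda_1(s)-0)\,ds \le t$ in the sense used above, Lemma~3.3 of \cite{bib:ry99} (p.~389) gives that the local time of $\lambda_1$ at $0$ vanishes identically. Then I would introduce the stopping times $\vartheta = \inf\{t>0:\lambda_1(t)<0\}$ and $\kappa = \inf\{t>\vartheta: b_1(t)=0\}$, observe that at time $\vartheta$ we have $\lambda_1(\vartheta)=0$ and $b_1(\vartheta)=\alpha-(p-1)>0$ when $\alpha > p-1$ (so $\kappa>\vartheta$), and apply Tanaka's formula to $(\lambda_1)^-$ on the interval $[\vartheta\wedge T,(\vartheta+t)\wedge\kappa\wedge T]$. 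Because the local time term drops out and $b_1(s)>0$ on $[\vartheta,\kappa)$, we get $\ex\bigl(\lambda_1((\vartheta+t)\wedge\kappa\wedge T)\bigr)^- \le 0$, forcing $\lambda_1 \ge 0$ there, which contradicts the definition of $\vartheta$; hence $\vartheta=\infty$ a.s. For the borderline case $\alpha = p-1$, where $b_1$ only satisfies $b_1 \ge 0$ at the boundary, I would argue exactly as in the $\nu=-1$ case of Theorem~\ref{th:PartSystem}: set $T_0=\inf\{t>0:\lambda_1(t)=0\}$, note $b_1(T_0)=0$, define $\tilde\lambda_1$ to be frozen at $0$ after $T_0$, check that $(\tilde\lambda_1,\lambda_2,\dots,\lambda_p)$ still solves the same system, and conclude $\lambda_1=\tilde\lambda_1\ge 0$ by the pathwise uniqueness of Theorem~\ref{th:betterYamada}.

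The main obstacle is the delicate point that the drift $b_1(t)$ involves the absolute values $|\lambda_k|$, which are not differentiable, and that one must ensure $\lambda_1$ stays strictly below the other eigenvalues so that the sign of each denominator $\lambda_1-\lambda_k$ is controlled near $\{\lambda_1=0\}$; this is where Corollary~\ref{cor:collisionWishart} is essential. A secondary technical care is the justification of the vanishing of the local time of $\lambda_1$ at $0$ via Le~Gall's lemma, together with the correct bookkeeping of the stopping times so that the Tanaka-formula computation stays on an interval where $b_1>0$ (or, in the $\alpha=p-1$ case, where the freezing construction is legitimate). Once these are in place the rest is a routine expectation estimate and Gronwall-type conclusion, entirely parallel to the squared-Bessel particle-system argument already given.
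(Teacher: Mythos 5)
Your proposal is correct and matches the paper's argument: the paper's proof of Proposition~\ref{prop:hit:zero} consists precisely of invoking the stopping-time/Tanaka/local-time argument from Theorem~\ref{th:PartSystem}, with the drift at $\lambda_1=0$ equal to $\alpha-(p-1)$ and the freezing construction plus pathwise uniqueness handling the borderline case $\alpha=p-1$, exactly as you describe. The only superfluous remark is the closing mention of a Gronwall-type step, which is not needed for the non-negativity claim itself.
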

\begin{proof}
We  argue as in the proof of the Theorem \ref{th:PartSystem}.
\end{proof}
Consequently, using the unicity of  solutions in  Theorem \ref{th:yamadamatrix}, we obtain
\begin{corollary}
Consider the Wishart SDE (\ref{eq:WSDE}) with 
$x_0$ such that $0\leq \lambda_1(0)<\lambda_2(0)<\ldots<\lambda_p(0) $. Then the corresponding system of SDEs
for eigenvalue and eigenvector processes $(\Lambda_t,H_t)$ 
has a unique strong solution on $[0,\infty)$  for $\alpha \ge p-1$.
\end{corollary}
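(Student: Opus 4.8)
The plan is to assemble the pieces already proved. We start from the generalized Wishart SDE \eqref{eq:GenWishart:SDE} with $g(x)=\sqrt{|x|}$, $h(x)\equiv 1$, $b(x)\equiv\alpha$, so $G(x,y)=|x|+|y|$. The associated eigenvalue system is the one displayed just above Corollary \ref{cor:collisionWishart}, and the eigenvector system is the corresponding instance of \eqref{eq:dh}. Since $x_0$ satisfies $0\le\lambda_1(0)<\lambda_2(0)<\cdots<\lambda_p(0)$, in particular $x_0\in\tilde{\mathcal S}_p$, so Theorem \ref{th:eigen} applies and the eigenvalue/eigenvector processes do satisfy this system up to the collision time $\tau$.

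First I would invoke Corollary \ref{cor:collisionWishart} (i.e.\ Theorem \ref{prop:collision} applied to these $g,h,b$): since $b\equiv\alpha$ and $g^2=|\,\cdot\,|$, $h^2\equiv 1$ are Lipschitz, and $g^2h^2(x)=|x|$ is convex, we get $\tau=\infty$ almost surely. Next, Proposition \ref{prop:hit:zero} gives that for $\alpha\ge p-1$ the smallest eigenvalue $\lambda_1(t)$ stays non-negative; hence the process $(\Lambda_t,H_t)$ in fact lives in the region $\{0\le\lambda_1<\lambda_2<\cdots<\lambda_p\}\times O(p)$ for all time, and there $|\lambda_i|=\lambda_i$, so the eigenvalue system coincides with the genuine Wishart eigenvalue system (no absolute values). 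Then I would apply Theorem \ref{th:yamadamatrix}: the coefficient $g(x)h(x)=\sqrt{|x|}$ satisfies $|\sqrt{|x|}-\sqrt{|y|}|^2\le |x-y|$, so \eqref{ineq:rho} holds with $\rho(u)=u$ and $\int_{0^+}\rho^{-1}=\infty$; $G(x,y)=|x|+|y|$ is (locally) Lipschitz and strictly positive off the diagonal (the zeros of $g$ and $h$ are not common — indeed $h$ has none); and $b\equiv\alpha$ is Lipschitz. Hence pathwise uniqueness holds for the eigenvalue and eigenvector SDE system on $[0,\tau)=[0,\infty)$, and combined with $\tau=\infty$ this yields a unique strong solution on $[0,\infty)$.

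The one point needing a little care — and the place where the argument is not purely a quotation of earlier results — is reconciling the ``absolute value'' formulation used in \eqref{eq:GenWishart:SDE} and Theorem \ref{th:yamadamatrix} with the non-negativity supplied by Proposition \ref{prop:hit:zero}. The clean way is: by Proposition \ref{prop:hit:zero} the solution of the generalized system never leaves $\{\lambda_1\ge 0\}$, so it is also a solution of the Wishart eigenvalue system; conversely any solution of the Wishart eigenvalue system with $\lambda_1(0)\ge 0$ extends (by setting $|\lambda_i|=\lambda_i$) to a solution of the generalized system, to which the uniqueness from Theorem \ref{th:yamadamatrix} applies. Thus the two systems have the same solution set for such initial data, and uniqueness transfers. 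This is exactly the localization-plus-reflection pattern already carried out in detail in the proof of Theorem \ref{th:PartSystem}, so I would simply refer to that argument.

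I expect no serious obstacle here: every ingredient (no collisions, non-negativity of $\lambda_1$, spectral Yamada--Watanabe uniqueness) is already available, and the only real work is the bookkeeping in the previous paragraph, i.e.\ checking that the hypotheses of Theorem \ref{th:yamadamatrix} are met by $g(x)=\sqrt{|x|}$, $h\equiv 1$, $b\equiv\alpha$ and that non-negativity of $\lambda_1$ lets one pass between the $|\,\cdot\,|$-version and the true Wishart version. So the proof can be stated in a few lines: apply Corollary \ref{cor:collisionWishart} for $\tau=\infty$, Proposition \ref{prop:hit:zero} for $\lambda_1\ge 0$, and Theorem \ref{th:yamadamatrix} for pathwise uniqueness, exactly as in the proof of Theorem \ref{th:PartSystem}.
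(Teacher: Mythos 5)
Your proposal is correct and follows essentially the same route as the paper: the authors also pass through the generalized (absolute-value) Wishart SDE \eqref{eq:GenWishart:SDE} with $g(x)=\sqrt{|x|}$, $h\equiv 1$, $b\equiv\alpha$, use Theorem \ref{prop:collision} (via Corollary \ref{cor:collisionWishart}) for $\tau=\infty$, Proposition \ref{prop:hit:zero} for $\lambda_1(t)\ge 0$ when $\alpha\ge p-1$, and the uniqueness from Theorem \ref{th:yamadamatrix} to transfer the result to the true Wishart eigenvalue/eigenvector system. Your extra bookkeeping about identifying solutions of the $|\cdot|$-version with those of the genuine system is exactly the (implicit) content of the paper's one-line deduction.
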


\begin{remark}
  Bru \cite{bib:b91} showed that for $\alpha>p-1$ the Wishart processes have the absolutely continuous Wishart laws
 which are very important  in multivariate statistics, see e.g. the monograph of Muirhead \cite{bib:Mu}. The 
 singular  Wishart  processes corresponding to $\alpha=1,\ldots p-1$  are obtained as $X_t=N_t^T\, N_t$
where $N_t$ is an $\alpha \times p$ Brownian matrix. Then $X_0=N_0^T\, N_0$ has eigenvalue 0 of multiplicity $p-\alpha$
 so $x_0\not\in  \tilde{\mathcal S}_p$.  
  \end{remark}

\begin{remark}
 An important perturbation of the Wishart SDE (\ref{eq:WSDE}) is the equation for the Wishart process with constant drift $c>0$,
 which may be also viewed as a squared matrix Ornstein-Uhlenbeck process
 \begin{equation}\label{eq:wishartDrift}
   dX_t = \sqrt{X_t}dB_t+dB_t^T \sqrt{X_t}+\alpha I dt + cX_tdt, \ \ X_0\in  \tilde{\mathcal S}_p.
 \end{equation}
This equation has the form (\ref{eq:general}) with $g(x)=\sqrt{x}, h(x)=1 $ and $b(x)=\alpha + cx$. By Theorems
 \ref{prop:collision} and \ref{th:yamadamatrix}, the SDEs system for its eigenvalue and eigenvector processes  has a unique  strong solution  with $t\in[0,\infty)$
 for  any $\alpha\ge p-1$, $c>0$ and   $0\leq \lambda_1(0)<\lambda_2(0)<\ldots<\lambda_p(0)$.
 More general  squared matrix Ornstein-Uhlenbeck processes were first studied by Bru \cite{bib:b91}
 and recently by Mayerhofer et al. \cite{bib:mayer}. Our spectral strong existence and uniqueness result for (\ref{eq:wishartDrift})
 is not covered by these papers.
 \end{remark}
 
 \begin{remark}
  The existence and pathwise unicity of strong solutions for the Wishart SDE (\ref{eq:WSDE}) for $\alpha\ge p-1$ remains an open problem.
  The difficulty of proving it  is   related to a redundance  in the SDE  (\ref{eq:WSDE}), cf. Remark \ref{dimensions}.
  On the other hand our result on the strong  existence and pathwise unicity of  eigenvalues and eigenvectors of $X_t$ 
  supports the conjecture of the  existence and pathwise unicity of strong solutions for the Wishart SDE (\ref{eq:WSDE}) for $\alpha\ge p-1$.
 \end{remark}

%%%%%%%%%%%%%%%%%%%%%%%%%%%%%%%%%%%%%%%%%%%%%%%%%%%%%%%%%%%%%%%%%%%%%%%%%%%
\subsection{  Matrix Jacobi processes}\label{sec:jacobi}

Let $0_p$ and $I_p$ be zero and identity $p\times p$ matrices. Define  $\mathcal{S}_p[0,I]=\{ X\in \mathcal{S}_p|\ 0_p\le X\le I_p\}$.
Denote by $\hat { \mathcal S}_p[0,I]=\{ X\in \mathcal{S}_p |\ 0_p< X< I_p\}$
and   by $ \tilde{\mathcal S}_p[0,I]$ the set of matrices in  $\mathcal{S}_p[0,I]$ with distinct eigenvalues.
 A matrix  Jacobi process of dimensions $(q,r)$, with $q\wedge r>p-1$, and with values in
 $\mathcal{S}_p[0,I]$, was defined and studied by Doumerc \cite{bib:doumerc} as a solution of the following
 matrix SDE, with respect to  a $p\times p$ Brownian matrix $B_t$
 \begin{equation} \label{eq:JSDE}
\left\{
\begin{array}{rcl}
  dX_t &=& \sqrt{X_t}dB_t \sqrt{I_p-X_t} + \sqrt{I_p-X_t}dB_t^T \sqrt{X_t} +(qI_m-(q+r)X_t)dt\\
  X_0 &=& x_0\in  {\mathcal S}_p[0,I]\/.
\end{array}
\right.
 \end{equation}
  In \cite{bib:doumerc} Th.9.3.1, p.135 it was shown that if $q\wedge r\ge p+1$  and
$x_0\in\hat { \mathcal S}_p[0,I]$  then (\ref{eq:JSDE}) has a unique   strong  solution in $\hat { \mathcal S}_p[0,I]$.
In the case  $q \wedge r\in (p-1,p+1)$ and $x_0\in  \tilde{\mathcal S}_p[0,I]$   the existence of a unique
solution in law was proved in  \cite{bib:doumerc}.  Our methods allow one to strengthen the results of Doumerc.
\begin{corollary}
 Let $q\wedge r\ge p-1$ and $x_0\in  \tilde{\mathcal S}_p[0,I]$. Then the SDEs system for the eigenvalue and eigenvector processes for the matrix SDE (\ref{eq:JSDE})
 has a unique strong solution for $t\in[0,\infty)$.
\end{corollary}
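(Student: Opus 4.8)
The plan is to follow the three-step strategy used above for the generalized Wishart equation (\ref{eq:GenWishart:SDE}) and Theorem~\ref{th:PartSystem}: first remove the constraint $0\le X_t\le I_p$ by passing to a globally defined ``generalized'' matrix Jacobi equation, then invoke Theorems~\ref{th:eigen} and \ref{th:yamadamatrix} in this enlarged setting, and finally push the eigenvalues back into $[0,1]$ by a local-time argument exploiting $q\wedge r\ge p-1$.

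Equation (\ref{eq:JSDE}) has the form (\ref{eq:general}) with $g(x)=\sqrt x$, $h(x)=\sqrt{1-x}$, $b(x)=q-(q+r)x$, so $G(x,y)=x(1-y)+y(1-x)$. First I would consider the generalized equation
\formula{
 dY_t &=\sqrt{|Y_t|}\,dB_t\,\sqrt{|I_p-Y_t|}+\sqrt{|I_p-Y_t|}\,dB_t^T\,\sqrt{|Y_t|}\\
 &\qquad{}+(qI_p-(q+r)Y_t)\,dt,\qquad Y_0=x_0\in\tilde{\mathcal S}_p[0,I],
}
where $|Y_t|$ and $|I_p-Y_t|$ are obtained by taking absolute values of the eigenvalues; this corresponds to $g(x)=\sqrt{|x|}$, $h(x)=\sqrt{|1-x|}$, hence $g^2(x)=|x|$, $h^2(x)=|1-x|$, $b(x)=q-(q+r)x$ and $G(x,y)=|x|\,|1-y|+|y|\,|1-x|$. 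These data are globally defined: $g^2,h^2,b$ are Lipschitz, $gh=\sqrt{|x(1-x)|}$ is locally $1/2$-H\"older (so, truncating $g$ and $h$ far from $[0,1]$ if necessary, (\ref{ineq:rho}) holds), and $G$ is locally Lipschitz and strictly positive off the diagonal, since $g$ and $h$ each have exactly one zero (at $0$ and at $1$ respectively) and these are distinct. Hence Theorem~\ref{th:eigen} produces the eigenvalue and eigenvector SDEs of $Y_t$, and Theorem~\ref{th:yamadamatrix} gives a pathwise unique strong solution $(\Lambda_t,H_t)$ of that system up to the first collision time $\tau$.

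The next step is to confine the eigenvalues to $[0,1]$. Arguing as in Theorem~\ref{th:PartSystem} and Proposition~\ref{prop:hit:zero}, I would run a simultaneous local-time argument for the pair $(\lambda_1,\lambda_p)$: the diffusion coefficients being of square-root type at the respective boundaries, the local times of $\lambda_1$ at $0$ and of $\lambda_p$ at $1$ vanish (\cite{bib:ry99}, Lemma~3.3, p.~389), while at $\lambda_1=0$ the drift of $\lambda_1$ equals $q-(p-1)\ge0$ and at $\lambda_p=1$ the drift of $\lambda_p$ equals $(p-1)-r\le0$; Tanaka's formula then forces $\lambda_1(t)\ge0$ and $\lambda_p(t)\le1$ for $t<\tau$. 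In the borderline cases $q=p-1$ or $r=p-1$, where the boundary drift vanishes, one instead uses the argument of the $\nu=-1$ case of Theorem~\ref{th:PartSystem}, freezing the extreme eigenvalue at the wall once it is reached and invoking pathwise uniqueness. On $[0,1]$ the generalized data coincide with the genuine Jacobi data, so the confined process is exactly the eigenvalue/eigenvector solution associated with (\ref{eq:JSDE}).

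It remains to prove $\tau=\infty$ a.s. Here Theorem~\ref{prop:collision} (and hence Corollary~\ref{cor:global}) cannot be quoted verbatim, since the globally extended $g^2h^2=|x(1-x)|$ is neither convex nor $\mathcal C^{1,1}$, having convex corners at $0$ and $1$. However, once the eigenvalues are confined to $[0,1]$, on that interval $b$ is affine and $G(x,x)=2x(1-x)$ is $\mathcal C^\infty$, so every estimate in the proof of Theorem~\ref{prop:collision} for the finite-variation part of $U=-\sum_{i<j}\log(\lambda_j-\lambda_i)$ --- in particular the bound on $A^{(5)}$ by the second divided differences of $x\mapsto2x(1-x)$ on $[0,1]$ --- goes through with explicit constants, and McKean's argument (\cite{bib:McKean,bib:mayer}) yields $\tau=\infty$. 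Combining pathwise uniqueness up to $\tau$ with $\tau=\infty$ and the construction of a solution via the generalized equation then gives the unique strong solution on $[0,\infty)$. The main obstacle is the boundary analysis of the confinement step --- keeping the smallest and largest eigenvalues inside $[0,1]$, especially for the extremal parameters $q=p-1$ and $r=p-1$ where the boundary drift degenerates --- which is the matrix counterpart of the reflecting-wall phenomenon for $\textrm{BESQ}^{(\nu)}$ with $\nu=-1$ treated in Theorem~\ref{th:PartSystem}.
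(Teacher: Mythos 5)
Your proposal is correct and follows essentially the same route as the paper: the paper's proof is exactly ``apply Theorems \ref{prop:collision} and \ref{th:yamadamatrix} with $g(x)=\sqrt{|x|}$, $h(x)=\sqrt{|1-x|}$, $b(x)=q-(q+r)x$, then prove as in Theorem \ref{th:PartSystem} that $0\le\lambda_1(t)<\ldots<\lambda_p(t)\le 1$'', and your construction of the generalized equation, the use of the spectral Yamada--Watanabe theorem up to $\tau$, and the two-sided Tanaka/local-time confinement (including the freezing argument for the borderline cases $q=p-1$, $r=p-1$) is precisely that. The one place where you genuinely diverge is the collision time: you observe, correctly, that the globally extended $g^2h^2=|x(1-x)|$ is neither convex nor $\mathcal{C}^{1,1}$, so Theorem \ref{prop:collision} does not apply verbatim, and you repair this by first confining the eigenvalues to $[0,1]$ (valid for $t<\tau$) and then rerunning the McKean estimate there, where $G(x,x)=2x(1-x)$ is smooth; the paper simply invokes Theorem \ref{prop:collision}, which is slightly imprecise on this point. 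An alternative, closer to the paper's one-line citation, is to note that $|x(1-x)|=x(1-x)+2\max(x^2-x,0)$ is the sum of a $\mathcal{C}^{1,1}$ function and a convex function, so the decomposition of $A^{(5)}$ in the proof of Theorem \ref{prop:collision} still yields a bounded-plus-nonpositive finite-variation part and $\tau=\infty$ without any confinement step; either fix is fine, and yours has the merit of making the logical order (uniqueness up to $\tau$, then confinement for $t<\tau$, then $\tau=\infty$) completely explicit.
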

\begin{proof}
 We apply Theorems  \ref{prop:collision} and \ref{th:yamadamatrix}  with $g(x)=\sqrt{|x|}$, $h(x)=\sqrt{|1-x|}$
 and $b(x)=q-(q+r)x$. Next we prove similarly as in the proof of the Theorem \ref{th:PartSystem} that
 $0\le \lambda_1(t)<\ldots<\lambda_p(t)\le 1$.
\end{proof} 

 %%%%%%%%%%%%%%%%%%%%%%%%%%%%%%%%%%%%%%%%%%%%%%%%%%%%%%%%%%%%%%%%%%%%%%%%
\subsection{ $\beta$-Wishart  and
$\beta$-Jacobi   processes }\label{sec:systemsBeta}

Let $\beta>0$.  One calls a $\beta$-Wishart  process
 a solution of the system of SDEs
 
 \begin{equation}\label{eq:wishartBeta}
d\la_i=2 \sqrt {\la_i} d\nu_i +\beta\left( \alpha    +\sum_{k\not=i} \frac{ \lambda_i+\lambda_k
} {\lambda_i-\lambda_k}\right) dt.    
 \end{equation} 
The $\beta$-Wishart  processes were   studied  by Demni \cite{bib:demniARXIV}.
In the theory of random matrices and its physical applications, the  $\beta$-Wishart  processes are related to Chiral Gaussian Ensembles,
which were introduced
as effective
(approximation) theoretical models describing energy spectra of quantum particle
systems in high energy physics. Usually a symmetry of Hamiltonian is imposed and it fixes the value
of $\beta$ to be 1, 2 or 4, respectively  in real symmetric, hermitian and symplectic  cases.
 On the other hand, from the   point of view of statistical physics,
$\beta$ is regarded as the inverse temperature, $\beta=1/(k_B T)$,
and should be treated as a continuous positive parameter. In this sense, the
$\beta$-Wishart  systems are statistical mechanics models of ``$\log$-gases''
(The strength of the force between particles is proportional to the inverse of
distances. Then the potential, which is obtained by integrating the  force, is
logarithmic function  of the distance.
So the system is called a ``$\log$-gas'').  For more information on $\log$-gases,
see  the recent monograph of Forrester \cite{bib:forr}.
 
 In \cite{bib:demniARXIV} the existence and uniqueness of strong solutions of the SDE system
(\ref{eq:wishartBeta}) was established 
for $\beta >0 $  and $\alpha>p-1+\frac1\beta$.   Lépingle \cite{bib:lep} observed the last result
in the classical Wishart  case $\beta=1$.
Our  Theorem  \ref{thm:betterYamada}
 and  Corollary \ref{cor:beta}, together with comparison techniques like in the proof
 of the Theorem \ref{th:PartSystem}, imply the following result, not covered by results of Demni and  Lépingle.
 \begin{corollary}
  The SDE system (\ref{eq:wishartBeta}) with $0\leq \lambda_1(0)<\lambda_2(0)<\ldots<\lambda_p(0)$ has a unique strong solution for $t\in[0,\infty)$, 
  for any $\alpha\ge p-1$ and $\beta\ge 1$.
 \end{corollary}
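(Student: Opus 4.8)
The plan is to reduce the $\beta$-Wishart system \eqref{eq:wishartBeta} to a combination of the results already established in the paper, exactly as in the proof of Theorem \ref{th:PartSystem}. First I would replace \eqref{eq:wishartBeta} by the auxiliary system on all of $\R^p$ in which $\sqrt{\la_i}$ is replaced by $\sqrt{|\la_i|}$ and each numerator $\la_i+\la_k$ by $|\la_i|+|\la_k|$, so that the equations become globally defined up to the first collision time $\tau$. With $g(x)=\sqrt{|x|}$, $h(x)=1$, $b(x)=\alpha$, one has $G(x,y)=|x|+|y|$, which is Lipschitz and strictly positive off the diagonal, while $g^2h^2(x)=|x|$ is convex; hence Corollary \ref{cor:beta} applies for $\beta\ge 1$ and gives $\tau=\infty$ a.s.

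Next I would establish pathwise uniqueness and strong existence for this auxiliary system. The diffusion coefficient is $2\sqrt{|\la_i|}$, which is $1/2$-H\"older, so it satisfies the $\rho$-condition of Theorem \ref{thm:betterYamada} with $\rho(x)=4x$ (and $\int_{0^+}\rho^{-1}=\infty$); the drift terms are locally Lipschitz on the region $\{\la_1<\dots<\la_p\}$ but not bounded, so, just as in the proof of Theorem \ref{th:yamadamatrix}, I would exhaust this region by the compact sets $D_m=\{\la_1<\dots<\la_p,\ \la_{i+1}-\la_i\ge 1/m,\ \la_i\le m\}$, extend the drifts to bounded Lipschitz functions on $\R^p$, apply Theorem \ref{thm:betterYamada} to the truncated systems, and patch the solutions together using $\tau=\infty$. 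This yields a pathwise unique strong solution $(Y_i)$ of the auxiliary system for all $t\ge0$, starting from $0\le Y_1(0)<\dots<Y_p(0)$.

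It then remains to show $Y_1(t)\ge0$ for all $t$, so that $|Y_i|=Y_i$ and $(Y_i)$ actually solves \eqref{eq:wishartBeta}; here I expect the main obstacle. Following the comparison argument in Theorem \ref{th:PartSystem}, set $b_1(t)=\beta\bigl(\alpha+\sum_{j\ne1}\frac{|Y_1|+|Y_j|}{Y_1-Y_j}\bigr)$ and note that at any time when $Y_1=0$ one has $b_1=\beta(\alpha-(p-1))\ge0$ for $\alpha\ge p-1$. One shows via \cite{bib:ry99} Lemma 3.3 that the local time of $Y_1$ at $0$ vanishes, so Tanaka's formula gives $\ex(Y_1(t))^-\le 0$ on suitable stopped intervals (stopping $b_1$ before it can become negative, which by continuity requires $Y_1$ to have become strictly positive first), forcing $Y_1\ge0$; in the boundary case $\alpha=p-1$ one uses the usual absorption trick, replacing $Y_1$ by $0$ after its first hitting time of $0$ and checking this is still a solution, whence by uniqueness it coincides with $Y_1$. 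The delicate point is handling the boundary value $\alpha=p-1$ and ensuring the stopping times are defined so that $b_1\ge0$ on the relevant intervals; this is precisely the argument sketched for Theorem \ref{th:PartSystem}, and I would simply refer to it. Combining the three steps gives the existence and uniqueness of a strong solution of \eqref{eq:wishartBeta} on $[0,\infty)$ for all $\alpha\ge p-1$ and $\beta\ge1$.
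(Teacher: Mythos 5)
Your proposal is correct and follows essentially the same route as the paper, whose proof of this corollary is precisely the combination you describe: Corollary \ref{cor:beta} (with $g(x)=\sqrt{|x|}$, $h=1$, $b=\alpha$, so $G(x,y)=|x|+|y|$) to get $\tau=\infty$ for $\beta\ge1$, Theorem \ref{thm:betterYamada} with the localization of Theorem \ref{th:yamadamatrix} for pathwise uniqueness and strong existence of the auxiliary absolute-value system, and the Tanaka/local-time comparison argument of Theorem \ref{th:PartSystem} (strict positivity of the drift at $0$ for $\alpha>p-1$, the absorption trick for $\alpha=p-1$) to show $\lambda_1\ge0$. No gaps beyond those the paper itself leaves to the reader by referring to the earlier proofs.
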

%$\,$\\ 

 The $\beta$-Jacobi processes $(\lambda_i), i=1,\ldots,p$  are $[0,1]^p$-valued processes generalizing processes of eigenvalues
 of matrix Jacobi processes defined by (\ref{eq:JSDE}):
 \begin{equation}\label{eq:lambdaJacobiBeta}
  d\la_i=2 \sqrt {\la_i(1-\lambda_i)} d\nu_i +\beta\left( q-(q+r)\lambda_i   +\sum_{k\not=i} \frac{ \lambda_i(1-\lambda_k)+\lambda_k(1-\lambda_i)
} {\lambda_i-\lambda_k}\right) dt.    
 \end{equation}
 Indeed, for $\beta=1$ the formula (\ref{eq:lambdaJacobiBeta}) was shown in \cite{bib:doumerc} and it follows directly from the Theorem \ref{th:eigen}.
 $\beta$-Jacobi processes were recently studied by Demni in   \cite{bib:demniJACOBI}. He showed that the system (\ref{eq:lambdaJacobiBeta})
has a unique strong solution for all time $t$ when $\beta>0$  and $q\wedge r>p-1+1/\beta$. As an application of  Theorem  \ref{thm:betterYamada},
 Proposition \ref{prop:collision} and  the comparison techniques like in the proof
 of the Theorem \ref{th:PartSystem},   we improve this result when $\beta\ge 1$.
 \begin{corollary}
  The SDE system (\ref{eq:lambdaJacobiBeta}) with $0\leq \lambda_1(0)<\lambda_2(0)<\ldots<\lambda_p(0)\le 1$ has a unique strong solution for $t\in[0,\infty)$, 
  for any  $\beta\ge 1$ and  $q\wedge r\ge p-1$.
 \end{corollary}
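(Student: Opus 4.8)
The plan is to follow, step by step, the proof of Theorem~\ref{th:PartSystem}, with the squared Bessel particle system replaced by (\ref{eq:lambdaJacobiBeta}). Put $g(x)=\sqrt{|x|}$, $h(x)=\sqrt{|1-x|}$ and $b(x)=q-(q+r)x$, so that $g^2(x)=|x|$, $h^2(x)=|1-x|$, $G(x,y)=|x|\,|1-y|+|y|\,|1-x|$, and $gh(x)=\sqrt{|x(1-x)|}$; here $g^2$, $h^2$, $b$ are globally Lipschitz, $G$ is locally Lipschitz and strictly positive off the diagonal, and $gh$ is $\tfrac12$-H\"older on bounded sets. First I would introduce the auxiliary system obtained from (\ref{eq:lambdaJacobiBeta}) by replacing $\lambda_i$ with $|\lambda_i|$ and $1-\lambda_i$ with $|1-\lambda_i|$ inside the square root and in the numerator of the interaction term, driven by the same $(\nu_i)$; it is well defined on $\R^p$ up to the first collision time $\tau=\inf\{t:\lambda_i(t)=\lambda_j(t)\text{ for some }i\ne j\}$, and it agrees with (\ref{eq:lambdaJacobiBeta}) on the region $\{0\le\lambda_1<\ldots<\lambda_p\le 1\}$. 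For local strong existence and pathwise uniqueness up to $\tau$ I would argue as in the proof of Theorem~\ref{th:yamadamatrix}: truncate to the compact sets $D_m=\{-m\le\lambda_1<\ldots<\lambda_p\le m,\ \lambda_{i+1}-\lambda_i\ge 1/m\}$, where the diffusion coefficients $2gh(\lambda_i)$ and the drifts $\beta b(\lambda_i)+\beta\sum_{k\ne i}G(\lambda_i,\lambda_k)/(\lambda_i-\lambda_k)$ are bounded, the latter Lipschitz, and $|gh(x)-gh(y)|^2\le C_m|x-y|$; extend these to bounded functions on $\R^p$ with the same properties and apply Theorem~\ref{thm:betterYamada} to the resulting $p$-equation system (the $Y$-block, with $\rho_i(u)=C_m u$, for which $\int_{0^+}\rho_i^{-1}=\infty$), obtaining pathwise uniqueness and, together with standard weak existence, a unique strong solution; patching along the exit times from the $D_m$, whose union exhausts $\{\lambda_1<\ldots<\lambda_p\}$, yields a unique strong solution of the auxiliary system on $[0,\tau)$.

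Next I would establish the confinement $0\le\lambda_1(t)<\ldots<\lambda_p(t)\le 1$ on $[0,\tau)$ by comparison, exactly as for $Y_1^{(\nu)}$ in Theorem~\ref{th:PartSystem}. The relevant facts are that at $\lambda_1=0$ (where the remaining $\lambda_k$ are positive) the drift of $\lambda_1$ equals $\beta(q-(p-1))\ge 0$, that at $\lambda_p=1$ (where the remaining $\lambda_k$ are less than $1$) the drift of $\lambda_p$ equals $\beta((p-1)-r)\le 0$, and that the diffusion coefficient of $\lambda_1$ (of $\lambda_p$) vanishes at $0$ (at $1$). Together with $L^0(\lambda_1)=0$ and $L^0(1-\lambda_p)=0$, which follow from \cite{bib:ry99} Lemma~3.3, p.~389 as in the proof of Theorem~\ref{th:PartSystem}, Tanaka's formula gives $\ex(\lambda_1((\vartheta+t)\wedge\kappa\wedge T))^-\le 0$ and the analogous bound for $1-\lambda_p$, contradicting the definitions of the corresponding hitting times $\vartheta,\kappa$. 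In the boundary cases $q=p-1$ or $r=p-1$, where the drift at the boundary vanishes, one argues instead as in the $\nu=-1$ case of Theorem~\ref{th:PartSystem}: freezing $\lambda_1$ at $0$ after its first hitting time of $0$ produces a solution of the same system (the interaction terms $G(0,\lambda_k)/(0-\lambda_k)$ staying regular), so the pathwise uniqueness already proved forces $\lambda_1\ge 0$, and symmetrically $\lambda_p\le 1$.

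The third step is the absence of collisions. On $[0,\tau)$ we now have $\lambda_i\in[0,1]$, so along the trajectory $G(x,x)=2x(1-x)$ is the polynomial $2x-2x^2$, whose second divided differences are constant; hence the term $A^{(5)}$ appearing in the proof of Theorem~\ref{prop:collision} is bounded on every bounded time interval, even though $g^2h^2$ is not globally convex or $\mathcal{C}^{1,1}$. Thus the argument of Corollary~\ref{cor:beta} applies for $\beta\ge 1$ (the extra term $2(1-\beta)\sum_{i<j}G(\lambda_i,\lambda_j)/(\lambda_j-\lambda_i)^2$ being nonpositive), and the McKean argument (\cite{bib:McKean,bib:mayer}) shows that $U=-\sum_{i<j}\log(\lambda_j-\lambda_i)$ cannot explode in finite time, so $\tau=\infty$ almost surely. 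Consequently the auxiliary system coincides with (\ref{eq:lambdaJacobiBeta}) for all $t$, and the three steps together give the asserted unique strong solution of (\ref{eq:lambdaJacobiBeta}) on $[0,\infty)$.

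The main obstacle I anticipate is keeping the last two steps free of circularity: the no-collision estimate is available only once the $\lambda_i$ are known to live in $[0,1]$, where $g^2h^2$ is smooth, while the frozen-path comparison used for the boundary cases $q=p-1$, $r=p-1$ relies on the pathwise uniqueness of the first step; carrying all of this out on the common interval $[0,\tau)$, and checking that the frozen process $(\tilde\lambda_1,\lambda_2,\ldots,\lambda_p)$ is genuinely a solution of the same system — which needs the singular interaction terms to remain regular, hence the remaining $\lambda_k$ to stay strictly positive, a point where confinement and non-collision feed into each other — is the delicate part of the argument.
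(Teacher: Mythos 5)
Your proposal follows essentially the same route as the paper, which proves this corollary by combining Theorem~\ref{thm:betterYamada} (with localization as in Theorem~\ref{th:yamadamatrix}), the $\beta$-collision result of Corollary~\ref{cor:beta}, and the comparison/Tanaka techniques of Theorem~\ref{th:PartSystem} to confine the particles to $[0,1]$ for $q\wedge r\ge p-1$. Your additional care in restricting the collision estimate to $[0,1]^p$, where $G(x,x)=2x(1-x)$ is smooth even though $|x(1-x)|$ is neither convex nor $\mathcal{C}^{1,1}$ on all of $\R$, is a correct refinement of a point the paper leaves implicit.
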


%%%%%%%%%%%%%%%%%%%%%%%%%%%%%%%%%%%%%%%%%%%%%%%%%%%%%%%%%%%%%%%%%%%%%%%%%%%%%%%%%%%%%%%%%%%%%%%%%%%%%%%%%%%%%%%%%%%%%%%%%%%%

%%%%%%%%%%%%%%%%%%%%%%%%%%%%%%%%%%%%%%%%%%%%%%%%%%%%%%%%%%%%%%%%%%%%%%%%%%%%%%%%%%%%%%%%%%%%%

%{\bf  Wishart-Laguerre processes}.
%They are considered by K\"onig-O'Connell\cite{bib:konig} and Demni \cite{bib:demni}.
%It is proved in \cite{bib:demni} that in the case of the complex Wishart SDE:
%$$
%dX_t=\sqrt{X_t} dB_t + dB^*_t \sqrt{X_t} + 2\delta I_m dt
%$$
%where $B_t$ is a complex Brownian $p\times p$ matrix,
%a unique 
%strong solution exists for $\delta>p$ and a unique 
%weak solution exists for $\delta>p-1$.

%%%%%%%%%%%%%%%%%%%%%%%%%%%%%%%%%%%%%%%%%%%%%%%%%%%%%%%%%%%%%%%%%%%%%%%%%%%%%%%%%%

%%%%%%%%%%%%%%%%%%%%%%%%%%%%%%%%%%%%%%%%%%%%%%%%%%%%%%%%%%%%%%%%%%%%%%%%%%%%%%%%%

%%%%%%%%%%%%%%%%%%%%%%%%%%%%%%%%%%%%%%%%%%%%%%%%%%%%%%%%%%%%%%%%%%%%%%%%%%%%%%%%%
\begin{remark}
It would be interesting to extend our generalization of the Yamada-Watanabe theorem to the SDE's considered
by C\'epa-L\'epingle \cite{bib:cepa}. 
On the other hand the Jacobi eigenvalues processes being an  important example of the radial Cherednik processes,
we conjecture that the strong existence and unicity would hold for radial   Cherednik processes.
For radial Dunkl processes this is proved by Demni \cite{bib:demniCRAS}, using \cite{bib:cepa}.  
 \end{remark}
 {\bf Acknowledgements.} We thank  N. Demni, C. Donati,  M. Katori and  M. Yor  for discussions
 and bibliographical indications.

\end{document}